\title{Numerical upscaling for wave equations with time-dependent multiscale coefficients%
\thanks{This work is funded by the German Research Foundation (DFG) -- Project-ID 258734477 -- SFB 1173 as well as by the Federal Ministry of Education and Research (BMBF) and the Baden-W\"urttemberg Ministry of Science as part of the Excellence Strategy of the German Federal and State Governments.}
}
\crefname{equation}{}{}
\crefname{enumi}{}{}
\newtheorem{theorem}{Theorem}
\newtheorem{proposition}[theorem]{Proposition}
\newtheorem{lemma}[theorem]{Lemma}
\newtheorem{corollary}[theorem]{Corollary}
\newtheorem{remark}[theorem]{Remark}
\numberwithin{theorem}{section}
\numberwithin{figure}{section}
\numberwithin{equation}{section}
\newenvironment{abstr}[1]{ \vspace{.05in}\footnotesize
	\parindent .2in
	{\upshape\bfseries #1. }\ignorespaces}{\par\vspace{.1in}}
\newenvironment{Abstract}{\begin{abstr}{Abstract}}{\end{abstr}}
\newenvironment{keywords}{\begin{abstr}{Key words}}{\end{abstr}}
\newenvironment{AMS}{\begin{abstr}{AMS subject classifications}}{\end{abstr}}
\DeclarePairedDelimiter{\abs}{\lvert}{\rvert}
\DeclarePairedDelimiter{\norm}{\lVert}{\rVert}
\newcommand{\sprod}[3]{\left( #2 \mid #3 \right)_{#1}}
\newcommand{\expp}[1]{\mathrm{e}^{#1}}
\newcommand{\diff}{\mathop{}\!\mathrm{d}}
\newcommand{\dt}{\diff t}
\newcommand{\dx}{\diff x}
\let\div\undefined
\DeclareMathOperator{\div}{\nabla \cdot} 
\DeclareMathOperator{\grad}{\nabla} 
\DeclareMathOperator{\Id}{\operatorname{Id}}
\newcommand{\Lin}[1]{\mathcal{L}(#1)}
\newcommand{\R}{\mathbb{R}} 
\newcommand{\N}{\mathbb{N}} 
\newcommand{\pt}{\partial_t}
\newcommand{\ptt}{\partial_{tt}}
\newcommand{\pttt}{\partial_{ttt}}
\newcommand{\domain}{\Omega}
\newcommand{\spacedim}{d}
\newcommand{\eps}{\varepsilon}
\newcommand{\Triah}{\mathcal{T}_h}
\newcommand{\TriaH}{\mathcal{T}_H}
\newcommand{\patch}[2][]{N_{\ifthenelse{\equal {#1} {}}{k}{#1}}{\ifthenelse{\equal {#2} {}}{}{(#2)}}}
\newcommand{\acoeff}{a}
\newcommand{\aeps}{\acoeff_\eps}
\newcommand{\aepsd}[1]{\acoeff_\eps^{#1}}
\newcommand{\apatch}[2][]{\overline{a}_{\patch[#1]{#2}}}
\newcommand{\alc}{c_a}
\newcommand{\auc}{C_a}
\newcommand{\ptauc}{C_a^{\pt}}
\newcommand{\tor}[1][]{\ifthenelse{\equal {#1} {}}{(t)}{({#1})}}
\newcommand{\f}{\textnormal{f}}
\newcommand{\ms}{\textnormal{ms}}
\newcommand{\patchsize}{k}
\newcommand{\kms}{{\patchsize,\ms}}
\newcommand{\Vhyp}{\mathcal{V}}
\newcommand{\Vhypt}[1][]{\Vhyp\tor[#1]}
\newcommand{\Hhyp}{\mathcal{H}}
\newcommand{\Xhyp}{\mathcal{X}}
\newcommand{\Xhypt}[1][]{\Xhyp\tor[#1]}
\newcommand{\solhypsec}{u}
\newcommand{\ptsolhypsec}{v}
\newcommand{\Ophypsec}{A}
\newcommand{\rhshypsec}{f}
\newcommand{\solhypseceps}{\solhypsec_\eps}
\newcommand{\solhypsecinit}{\solhypsec^0}
\newcommand{\ptsolhypsecinit}{\ptsolhypsec^0}
\newcommand{\solhyp}{y}
\newcommand{\Ophyp}{\mathcal{A}}
\newcommand{\rhshyp}{\mathcal{F}}
\newcommand{\solhypeps}{\solhyp_\eps}
\newcommand{\Ophypeps}{\Ophyp_\eps}
\newcommand{\solhypinit}{\solhyp^0}
\newcommand{\FESpaceh}{W_h}
\newcommand{\FESpaceH}{W_H}
\newcommand{\Vhyph}{\Vhyp_h}
\newcommand{\Hhyph}{\Hhyp_h}
\newcommand{\Xhyph}{\Xhyp_h}
\newcommand{\VhypH}{\Vhyp_H}
\newcommand{\HhypH}{\Hhyp_H}
\newcommand{\XhypH}{\Xhyp_H}
\newcommand{\Vhypf}{\Vhyp^f}
\newcommand{\Vhypms}{\Vhyp_\ms}
\newcommand{\Vhypmst}[1][]{\Vhypms\tor[#1]}
\newcommand{\Vhypmsk}{\Vhyp_\kms}
\newcommand{\Vhypmskt}[1][]{\Vhypmsk\tor[#1]}
\newcommand{\Hhypmsk}{\Hhyp_\kms}
\newcommand{\Hhypmskt}[1][]{\Hhypmsk\tor[#1]}
\newcommand{\Xhypms}{\Xhyp_\ms}
\newcommand{\Xhypmsk}{\Xhyp_\kms}
\newcommand{\Xhypmskt}[1][]{\Xhypmsk\tor[#1]}
\newcommand{\InterpolH}{I_H}
\newcommand{\ProjVhf}{R_\f}
\newcommand{\ProjVhfK}{R_{\f,K}}
\newcommand{\ProjVhft}[1][]{\ProjVhf\tor[#1]}
\newcommand{\ProjVhfd}[1][]{\ProjVhf^{\ifthenelse{\equal {#1} {}}{}{{#1}}}}
\newcommand{\ProjVhfKt}[1][]{R_{\f,K}\tor[#1]}
\newcommand{\ProjVhms}{R_\ms}
\newcommand{\ProjVhmst}[1][]{\ProjVhms\tor[#1]}
\newcommand{\ProjVhfk}{R_{\f,k}}
\newcommand{\ProjVhfkK}{R_{\f,k,K}}
\newcommand{\ProjVhfkt}[1][]{\ProjVhfk\tor[#1]}
\newcommand{\ProjVhfkd}[1][]{\ProjVhfk^{\ifthenelse{\equal {#1} {}}{}{{#1}}}}
\newcommand{\ProjVhfkKt}[1][]{\ProjVhfkK\tor[#1]}
\newcommand{\ProjVhfkKd}[1][]{\ProjVhfkK^{\ifthenelse{\equal {#1} {}}{}{{#1}}}}
\newcommand{\ProjVhmsk}{R_\kms}
\newcommand{\ProjVhmskt}[1][]{\ProjVhmsk\tor[#1]}
\newcommand{\ProjVhmskd}[1][]{\ProjVhmsk^{\ifthenelse{\equal {#1} {}}{}{{#1}}}}
\newcommand{\ProjHhmsk}{P_\kms}
\newcommand{\ProjHhmskt}[1][]{\ProjHhmsk\tor[#1]}
\newcommand{\ProjVhmsktilde}{\widetilde{R}_\kms}
\newcommand{\ProjVhmsktildet}[1][]{\ProjVhmsktilde\tor[#1]}
\newcommand{\ProjVhmsktilded}[1][]{\ProjVhmsktilde^{\ifthenelse{\equal {#1} {}}{}{{#1}}}}
\newcommand{\timed}[1]{t_{#1}} 
\newcommand{\ProjXhmsk}{\mathcal{P}_\kms}
\newcommand{\RefXhmsktilde}{\widetilde{\mathcal{R}}_\kms}
\newcommand{\RefXhmsktildet}[1][]{\RefXhmsktilde\tor[#1]}
\newcommand{\solhyph}{\solhyp_h}
\newcommand{\solhypsech}{\solhypsec_h}
\newcommand{\ptsolhypsech}{\ptsolhypsec_h}
\newcommand{\Ophyph}{\Ophyp_h}
\newcommand{\Ophypsech}{\Ophypsec_h}
\newcommand{\rhshyph}{\rhshyp_h}
\newcommand{\rhshypsech}{\rhshypsec_h}
\newcommand{\solhyphinit}{\solhyp_h^0}
\newcommand{\solhypsechinit}{\solhypsec_h^0}
\newcommand{\ptsolhypsechinit}{\ptsolhypsec_h^0}
\newcommand{\solhypmsk}{\solhyp_\kms}
\newcommand{\solhypsecmsk}{\solhypsec_\kms}
\newcommand{\ptsolhypsecmsk}{\ptsolhypsec_\kms}
\newcommand{\Ophypmsk}{\Ophyp_\kms}
\newcommand{\rhshypmsk}{\rhshyp_\kms}
\newcommand{\solhypmskinit}{\solhyp_\kms^0}
\newcommand{\errmsk}{e_\kms}
\newcommand{\errmsseck}{e_{\kms,\solhypsec}}
\newcommand{\pterrmsseck}{e_{\kms,\ptsolhypsec}}
\newcommand{\errmskrhs}{G_\kms}
\newcommand{\Xhypd}[1][]{\Xhyp^{\ifthenelse{\equal {#1} {}}{}{{#1}}}}
\newcommand{\Xhypmsd}[1][]{\Xhypms^{\ifthenelse{\equal {#1} {}}{}{{#1}}}}
\newcommand{\Xhypmskd}[1][]{\Xhypmsk^{\ifthenelse{\equal {#1} {}}{}{{#1}}}}
\newcommand{\Vhypd}[1][]{\Vhyp^{\ifthenelse{\equal {#1} {}}{}{{#1}}}}
\newcommand{\Vhypmsd}[1][]{\Vhypms^{\ifthenelse{\equal {#1} {}}{}{{#1}}}}
\newcommand{\Vhypmskd}[1][]{\Vhypmsk^{\ifthenelse{\equal {#1} {}}{}{{#1}}}}
\newcommand{\Hhypmskd}[1][]{\Hhypmsk^{\ifthenelse{\equal {#1} {}}{}{{#1}}}}
\newcommand{\solhypmskd}[1][]{\solhypmsk^{\ifthenelse{\equal {#1} {}}{}{{#1}}}}
\newcommand{\solhypsecmskd}[1][]{\solhypsecmsk^{\ifthenelse{\equal {#1} {}}{}{{#1}}}}
\newcommand{\ptsolhypsecmskd}[1][]{\ptsolhypsecmsk^{\ifthenelse{\equal {#1} {}}{}{{#1}}}}
\newcommand{\RefXhmsktilded}[1][]{\RefXhmsktilde^{\ifthenelse{\equal {#1} {}}{}{{#1}}}}
\newcommand{\ProjXhmskd}[1][]{\ProjXhmsk^{\ifthenelse{\equal {#1} {}}{}{{#1}}}}
\newcommand{\Ophypmskd}[1][]{\Ophypmsk^{\ifthenelse{\equal {#1} {}}{}{{#1}}}}
\newcommand{\rhshypmskd}[1][]{\rhshypmsk^{\ifthenelse{\equal {#1} {}}{}{{#1}}}}
\newcommand{\solhyphd}[1][]{\solhyph^{\ifthenelse{\equal {#1} {}}{}{{#1}}}}
\newcommand{\solhypsechd}[1][]{\solhypsech^{\ifthenelse{\equal {#1} {}}{}{{#1}}}}
\newcommand{\ptsolhypsechd}[1][]{\ptsolhypsech^{\ifthenelse{\equal {#1} {}}{}{{#1}}}}
\newcommand{\Ophyphd}[1][]{\Ophyph^{\ifthenelse{\equal {#1} {}}{}{{#1}}}}
\newcommand{\rhshyphd}[1][]{\rhshyph^{\ifthenelse{\equal {#1} {}}{}{{#1}}}}
\newcommand{\defectEuler}[1][]{\delta_\textnormal{BE}^{\ifthenelse{\equal {#1} {}}{}{{#1}}}}
\newcommand{\errmskd}[1][]{\errmsk^{\ifthenelse{\equal {#1} {}}{}{{#1}}}}
\newcommand{\errmsseckd}[1][]{\errmsseck^{\ifthenelse{\equal {#1} {}}{}{{#1}}}}
\newcommand{\pterrmsseckd}[1][]{\pterrmsseck^{\ifthenelse{\equal {#1} {}}{}{{#1}}}}
\newcommand{\errmskrhsd}[1][]{\errmskrhs^{\ifthenelse{\equal {#1} {}}{}{{#1}}}}
\newcommand{\OphypPGd}[1][]{\Ophyp_{k,\text{PG}}^{\ifthenelse{\equal {#1} {}}{}{{#1}}}}
\newcommand{\OphypsecPGd}[1][]{\Ophypsec_{k,\text{PG}}^{\ifthenelse{\equal {#1} {}}{}{{#1}}}}
\begin{document}

\author{Bernhard Maier\footnotemark[2]\and Barbara Verf\"urth\footnotemark[2]}
\date{}
\maketitle

\renewcommand{\thefootnote}{\fnsymbol{footnote}}
\footnotetext[2]{Institut für Angewandte und Numerische Mathematik, Karlsruher Institut f\"ur Technologie, Englerstr.~2, D-76131 Karlsruhe, \texttt{\{bernhard.maier, barbara.verfuerth\}@kit.edu}}
\renewcommand{\thefootnote}{\arabic{footnote}}

\begin{Abstract}
In this paper, we consider the classical wave equation with time-dependent, spatially multiscale coefficients.
We propose a fully discrete computational multiscale method in the spirit of the localized orthogonal decomposition in space with a backward Euler scheme in time.
We show optimal convergence rates in space and time beyond the assumptions of spatial periodicity or scale separation of the coefficients.
Further, we propose an adaptive update strategy for the time-dependent multiscale basis.
Numerical experiments illustrate the theoretical results and showcase the practicability of the adaptive update strategy.
\end{Abstract}

\begin{keywords}
wave equation,
numerical homogenization,
multiscale method,
time-dependent multiscale coefficients, 
a priori estimates
\end{keywords}

\begin{AMS}
35L05,
65M15,
65M60,
65N30
\end{AMS}

Time-modulated metamaterials \cite{AmmH21*,BalFP191} have recently received growing interest because of their astonishing physical properties such as the ability to break time-reversal symmetry~\cite{GuoDDN19,LiSZXC19}.
These can be modeled by the classical wave equation with space- and time-dependent coefficients.
Additionally, the metamaterials are characterized by fine spatial structures, such that these coefficients are in general rapidly varying on small spatial scales.
Besides time-modulated metamaterials, multiscale problems with time-dependent coefficients also occur in multiphysics simulations and for problems posed on evolving domains.
Furthermore, similar difficulties also arise for nonlinear wave-type problems, cf.~\cite{HocM21*,Mai21*,NikW19}.

Wave equations with time-dependent coefficients that are slowly varying in space were studied with finite element space discretizations and various time integration schemes in \cite{BalDS85,Bal86}.
However, in the metamaterial context with spatially multiscale coefficients, standard finite element methods need to resolve all scales leading to an enormous and often impractical computational effort.
Therefore, computational multiscale methods are suggested such as heterogeneous multiscale methods \cite{AbdG11,EngHR11}, multiscale finite element methods \cite{JiaE12,JiaEG10}, and rough polyharmonic splines \cite{OwhZ08}, to name a few.

In the present paper, we study wave equations with time-dependent multiscale coefficients, i.e., coefficients with a continuum of spatial scales that are only slowly evolving in time.
We propose a fully discrete computational multiscale method in the spirit of the Localized Orthogonal Decomposition (LOD) \cite{MalP14,HenP13,MalP20} with a backward Euler scheme for the time integration.
The LOD was successfully applied to time-harmonic wave propagation problems such as the Helmholtz equation, e.g., in \cite{GalP15,MaiV20*,Pet17,PetV20}, and Maxwell's equation \cite{GalHV17,HenP20}.
Wave equations with time-invariant multiscale coefficients were studied in \cite{AbdH17}, using the implicit Crank--Nicolson scheme for the integration in time.
In \cite{PetS17,MaiP19,GeeM21*} these results were extended to the explicit leapfrog scheme including mass lumping to further enhance the computational efficiency.
The presence of strong damping is considered in \cite{LjuMP21}.

Our main contribution is the first rigorous fully discrete a priori error analysis of the LOD for the wave equation with time-dependent coefficients.
We combine techniques for time-invariant multiscale \cite{AbdH17} and smooth time-dependent \cite{Bal86} coefficients in order to prove the expected order of convergence in space and time.
We strongly emphasize that merging these frameworks requires additional sophisticated concepts.
For instance, we prove exponential decay of time derivatives of the correction operators and rely on a special projection onto the multiscale space, which is not the usual Ritz projection.
Moreover, to enhance the computational efficiency of our scheme, we further propose an adaptive update strategy for the time-dependent multiscale basis using error indicators in the spirit of \cite{HelM19,HelKM20}.

Our paper is organized as follows. In~\Cref{sec:setting} we introduce the wave equation with time-dependent multiscale coefficients and our basic assumptions. \Cref{sec:lod} consists of two parts, where we first review the main concepts of the LOD and highlight the additional difficulties due to the time-dependent coefficients. Finally, we present the fully discrete numerical scheme and state our main error result. Subsequently, we provide a rigorous proof for this a priori error estimate in~\Cref{sec:errorAnalysis}. In~\Cref{sec:practicalAspects} we propose a Petrov--Galerkin variant of our scheme and introduce the adaptive update strategy. We conclude with numerical experiments that confirm our theoretical findings in~\Cref{sec:NumExp}.

\section{Setting} \label{sec:setting}
For $T > 0$ and a bounded polyhedral Lipschitz domain $\domain \subset \R^\spacedim$, $\spacedim \in \N$, we consider the wave equation with time-dependent multiscale coefficients
\begin{align} \label{eq:hypseceps}
  \left\{ ~
  \begin{aligned}
    \ptt \solhypseceps(t,x) &= \div \bigl( \aeps(t,x) \grad \solhypseceps(t,x) \bigr) + \rhshypsec(t,x), & t &\in [0,T], \, x \in \domain, \\
    \solhypseceps(t,x) &= 0\vphantom{\solhypsecinit}, & t &\in [0,T], \, x \in \partial \domain, \\
    \solhypseceps(0,x) &= \solhypsecinit, \qquad \pt \solhypseceps(0,x) = \ptsolhypsecinit, & x &\in \domain,
  \end{aligned}
  \right.
\end{align}
with given initial values $\solhypsecinit, \ptsolhypsecinit \in H_0^1(\domain)$ and right-hand side $\rhshypsec \in C^1([0,T],L^2(\domain))$. We assume that there are constants $\alc, \auc, \ptauc > 0$ which are in particular independent of $0 < \eps \ll 1$ such that the time-dependent multiscale parameter $\aeps \in C^1\bigl([0,T], L^\infty(\domain)\bigr)$  satisfies
\begin{align} \label{eq:aepsBounds}
  \alc &\leq \aeps(t,x) \leq \auc, & \abs{\pt \aeps(t,x)} &\leq \ptauc,
\end{align}
for all $t \in [0,T]$ and almost every $x \in \domain$. We emphasize that we do not assume $\aeps$ to be regular in space, since this would contradict the multiscale structure of $\aeps$.
We use the subscript $\eps$ to stress the multiscale nature of quantities, but assume neither periodicity nor scale separation.

\begin{remark} \label{rem:WellposednessEps}
  Note that for the sake of simplicity we only consider scalar-valued coefficients $\aeps$ here, but all arguments also extend to $\aeps$ being a symmetric matrix-valued coefficient. Further, we point out that wellposedness of \cref{eq:hypseceps} is shown in \cite{Gil72} under compatibility conditions on the initial data, i.e., on $\solhypsecinit$, $\ptsolhypsecinit$, and $\rhshypsec(0,\cdot)$. In particular, for vanishing initial data and $\rhshypsec \in W^{2,1}([0,T],L^2(\domain))$, there exists a unique solution $\solhypseceps$ of \cref{eq:hypseceps} satisfying
  \begin{equation} \label{eq:WellposednessEps}
    \solhypseceps \in C^3([0,T], L^2(\domain)) \cap C^2([0,T], H^1_0(\domain)).
  \end{equation}
\end{remark}

We introduce the Hilbert spaces
\begin{align}
  \Hhyp &= L^2(\domain), & \sprod{\Hhyp}{\varphi}{\psi} &= \sprod{L^2(\domain)}{\varphi}{\psi}, \\
  \Vhyp &= H_0^1(\domain), & \sprod{\Vhyp}{\varphi}{\psi} &= \sprod{L^2(\domain)}{\grad \varphi}{\grad \psi}.
\end{align}
Moreover, for $t \in [0,T]$ we use the time-dependent weighted inner product
\begin{align} \label{eq:defsprodVhypt}
  \sprod{\Vhypt,D}{\varphi}{\psi} &= \sprod{L^2(D)}{\aeps(t,\cdot) \grad \varphi}{\grad \psi}, & D &\subset \domain,
\end{align}
which is well defined due to \cref{eq:aepsBounds}. For $D = \domain$ we omit the subscript and simply write $\sprod{\Vhypt}{\cdot}{\cdot}$. Note that \cref{eq:aepsBounds} implies the norm equivalence
\begin{align} \label{eq:VVtnormequivalence}
  \alc \norm{\varphi}_{\Vhyp}^2 \leq \norm{\varphi}_{\Vhypt}^2 \leq \auc \norm{\varphi}_{\Vhyp}^2.
\end{align}
We further define the product spaces $\Xhyp = \Vhyp \times \Hhyp$ and $\Xhypt = \Vhypt \times \Hhyp$.

Based on these spaces, we study \cref{eq:hypseceps} with $\solhypeps = (\solhypseceps, \pt \solhypseceps)$ as the first-order Cauchy problem
\begin{align} \label{eq:hypeps}
  \left\{ ~
  \begin{aligned}
    \pt \solhypeps(t) &= \Ophypeps(t) \solhypeps(t) + \rhshyp(t), & t &\in [0,T], \\
    \solhypeps(0) &= \solhypinit,
  \end{aligned}
  \right.
\end{align}
with the initial value $\solhypinit = (\solhypsecinit, \ptsolhypsecinit)$, the right-hand side $\rhshyp = (0, \rhshypsec)$, and the time-dependent operator
\begin{align}
  \Ophypeps(t) &\colon D(\Ophypeps(t)) \to \Xhyp, & \Ophypeps(t) \begin{pmatrix} \varphi_1 \\ \varphi_2 \end{pmatrix} &= \begin{pmatrix} \varphi_2 \\ \div \bigl( \aeps(t,\cdot) \grad \varphi_1 \bigr) \end{pmatrix},
\end{align}
for $t \in [0,T]$. In particular, this implies for $\varphi = (\varphi_1, \varphi_2), \psi = (\psi_1, \psi_2)$
\begin{align}
  \sprod{\Xhypt}{\Ophypeps(t) \varphi}{\psi} &= \sprod{\Vhypt}{\varphi_2}{\psi_1} - \sprod{\Vhypt}{\varphi_1}{\psi_2}.
\end{align}

\section{Fully discrete localized orthogonal decomposition} \label{sec:lod}
At the end of this section, we propose a fully discrete numerical scheme for the wave equation with time-dependent multiscale coefficients \cref{eq:hypseceps}. To this end, we first introduce the spatially discrete setting in the spirit of the LOD in the first subsection. We then state the fully discrete scheme and our main result, which yields wellposedness as well as a rigorous error estimate.

\subsection{Localized orthogonal decomposition} \label{subsec:lod}
For time-invariant multiscale coefficients, the LOD is a well-established approach to improve the computational efficiency of numerical schemes. In particular, we refer to \cite{MalP20} for a detailed introduction to the topic. However, in our setting additional difficulties arise due to the dependency of the multiscale coefficient on time. Thus, we review the main ideas of the LOD in the following and highlight the additional difficulties. Overall, we mostly stick to the notation of \cite{MalP18}.

Let $\{\Triah\}_{h>0}$ and $\{\TriaH\}_{H>h}$ be two families of shape-regular and quasi-uniform triangulations of $\domain$, with $h$ and $H$ denoting the respective mesh width. Moreover, let $\FESpaceh$ and $\FESpaceH$ be the corresponding finite element spaces consisting of Lagrange elements of lowest order. In the following, we always assume that $\Triah$ is a refinement of $\TriaH$ such that $\FESpaceH \subset \FESpaceh$ is satisfied.

Further, since the construction of the scheme is built on quasi-local projections, we recursively define for $K \in \TriaH$ the patch of size $k \in \N_0$ by
\begin{align} \label{eq:notationPatches}
  \patch[0]{K} &= K, & \patch{K} &= \bigcup_{K \in \TriaH}\Bigl\{\overline{K} \cap \overline{\patch[k-1]{K}} \neq \emptyset\Bigr\}.
\end{align}

\paragraph{Fine finite element space}
In the following, we assume that the fine mesh width $h$ is chosen sufficiently small such that all oscillations of $\aeps$ are resolved. Thus, based on $\FESpaceh$ the standard finite element method is applicable. However, due to the smallness of $\eps$ and the associated high dimension of $\FESpaceh$, this is computationally at least very costly, if possible at all. Nevertheless, we introduce the scheme here, since this is used as the reference solution in the error estimates for our multiscale scheme. We emphasize that this reference solution is not needed for our scheme and is never computed in practice.

Based on the fine space $\FESpaceh$, we define the Hilbert spaces $\Hhyph = \FESpaceh$, $\Vhyph = \FESpaceh$, equipped with the inner product of $\Hhyp$ and $\Vhyp$, respectively. Additionally, we introduce the product space $\Xhyph = \Vhyph \times \Hhyph$.
We consider the discrete first-order Cauchy problem
\begin{align} \label{eq:Cauchyhypdisc}
  \left\{ ~
  \begin{aligned}
    \pt \solhyph(t) &= \Ophyph(t) \solhyph(t) + \rhshyph(t), & t &\in [0,T], \\
    \solhyph(0) &= \solhyphinit.
  \end{aligned}
  \right.
\end{align}
For $t \in [0,T]$ the time-dependent operator $\Ophyph(t) \colon \Xhyph \to \Xhyph$ is given by
\begin{align} \label{eq:defOphyph}
 \Ophyph(t) \begin{pmatrix} \varphi_1 \\ \varphi_2 \end{pmatrix} &= \begin{pmatrix} \varphi_2 \\ -\Ophypsech(t) \varphi_1 \end{pmatrix},
\end{align}
where the operator $\Ophypsech(t) \colon \Vhyph \to \Hhyph$ satisfies
\begin{align} \label{eq:defOphypsech}
  \sprod{\Hhyp}{\Ophypsech(t) \varphi_1}{\psi_2} &= \sprod{\Vhypt}{\varphi_1}{\psi_2}, & \varphi_1, \psi_2 &\in \Vhyph.
\end{align}
In general $\Ophypsech$ is not uniformly bounded with respect to $h$. Nevertheless, this implies
\begin{align} \label{eq:OphyphSkew}
 \sprod{\Xhypt}{\Ophyph(t) \varphi}{\psi} &= \sprod{\Vhypt}{\varphi_2}{\psi_1} - \sprod{\Vhypt}{\varphi_1}{\psi_2}, & \varphi, \psi &\in \Xhyph.
\end{align}
Moreover, the initial value $\solhyphinit \in \Xhyph$ and the right-hand side $\rhshypsech \colon [0,T] \to \Hhyph$ approximate $\solhypinit$ and $\rhshypsec$, respectively, and we set $\rhshyph = (0, \rhshypsech)$. Note that the wellposedness analysis in \cite{Gil72} also extends to the spatially discrete setting \cref{eq:Cauchyhypdisc}.

\paragraph{Coarse finite element space}
Correspondingly, for the coarse space $\FESpaceH$ we introduce the Hilbert spaces $\HhypH = \FESpaceH$, $\VhypH = \FESpaceH$, $\XhypH = \VhypH \times \HhypH$. Moreover, we introduce a projection $\InterpolH \in \Lin{\Vhyph,\VhypH}$, which satisfies for $K \in \TriaH$ the bounds
\begin{align} \label{eq:InterpolhHLocalBounds}
  \norm{\varphi - \InterpolH \varphi}_{\Hhyp,K} &\leq C_I H \norm{\varphi}_{\Vhyp,\patch[1]{K}}, & \norm{\varphi - \InterpolH \varphi}_{\Vhyp,K} &\leq C_I \norm{\varphi}_{\Vhyp,\patch[1]{K}}, & \varphi &\in \Vhyph,
\end{align}
with a constant $C_I > 0$ depending on the regularity of the mesh, but independent of $H$. In particular, this implies the global bounds
\begin{align} \label{eq:InterpolhHBounds}
  \norm{\varphi - \InterpolH \varphi}_{\Hhyp} &\lesssim C H \norm{\varphi}_{\Vhyp}, & \norm{\varphi - \InterpolH \varphi}_{\Vhyp} &\lesssim C \norm{\varphi}_{\Vhyp}, & \varphi &\in \Vhyph.
\end{align}
Moreover, we assume that $\InterpolH$ is not only stable in $\Vhyp$, but also in $\Hhyp$, i.e., we rely on
\begin{align} \label{eq:InterpolhHStableHhyp}
  \norm{\InterpolH \varphi}_{\Hhyp} &\lesssim C \norm{\varphi}_{\Hhyp}, & \varphi &\in \Hhyph.
\end{align}
Note that the quasi-interpolation operator introduced in \cite[Chap.~3.3]{MalP20}, which is also known as the Oswald operator, is a suitable choice for $\InterpolH$ satisfying our assumptions.

Based on $\InterpolH$ we decompose the fine space $\Vhyph = \Vhypf \oplus \VhypH$, where $\Vhypf = \ker \InterpolH$ and the dimension of $\VhypH$ is sufficiently small such that it allows for the construction of a computationally efficient scheme. However, we point out that this decomposition is not orthogonal with respect to the inner product of $\Vhyph$ and thus $\VhypH$ is not an optimal approximation space.

\paragraph{Time-dependent multiscale space}
We now enhance the coarse space $\FESpaceH$ to get an orthogonal decomposition of $\Vhyph$ with respect to the time-dependent inner product of $\Vhypt$. To this end, we introduce the time-dependent orthogonal projection $\ProjVhft \in \Lin{\Vhyph,\Vhypf}$ given by
\begin{align} \label{eq:defProjVhft}
  \sprod{\Vhypt}{\ProjVhft \varphi}{\psi} &= \sprod{\Vhypt}{\varphi}{\psi}, & t &\in [0,T], \, \varphi \in \Vhyph, \, \psi \in \Vhypf.
\end{align}
Furthermore, we make use of the corresponding mapping for time-dependent functions $\ProjVhf \colon C^1([0,T],\Vhyph) \to C^1([0,T],\Vhypf)$, which is well defined due to the regularity \cref{eq:aepsBounds} of $\aeps$ in time.
The time-dependent multiscale space is then defined by
\begin{align} \label{eq:defFESpacemst}
  \Vhypmst &= \bigl(\Id - \ProjVhft\bigr) \VhypH, & t &\in [0,T].
\end{align}
Note that for all $t \in [0,T]$ we have the orthogonal decomposition $\Vhyph = \Vhypmst \oplus \Vhypf$ with respect to $\sprod{\Vhypt}{\cdot}{\cdot}$.

We define for $t \in [0,T]$ the orthogonal projection $\ProjVhmst \in \Lin{\Vhyph,\Vhypmst}$ by
\begin{align} \label{eq:defProjVhmst}
  \sprod{\Vhypt}{\ProjVhmst \varphi}{\psi} &= \sprod{\Vhypt}{\varphi}{\psi}, & t &\in [0,T], \, \varphi \in \Vhyph, \, \psi \in \Vhypmst.
\end{align}
As before, we also introduce the corresponding mapping for time-dependent functions $\ProjVhms \colon C^1([0,T],\Vhyph) \to C^1([0,T],\Vhyph)$. In particular, we have $\ProjVhms = \Id - \ProjVhf$.


\paragraph{Localized time-dependent multiscale space}
Since $\ProjVhft$ is globally defined on $\domain$, the multiscale space $\Vhypmst$ in general consists of functions with global support and thus is not suited for the construction of an efficient numerical scheme. To circumvent this issue, we consider the element-wise contributions $\ProjVhfKt \colon \Vhyph \to \Vhypf$ given by
\begin{align} \label{eq:defProjVhfKt}
  \sprod{\Vhypt}{\ProjVhfKt \varphi}{\psi} &= \sprod{\Vhypt,K}{\varphi}{\psi}, & K &\in \TriaH, \, t \in [0,T], \, \varphi \in \Vhyph, \, \psi \in \Vhypf.
\end{align}
In particular, \cref{eq:defProjVhft} implies $\ProjVhf = \sum_{K \in \TriaH} \ProjVhfK$. As shown for instance in \cite[Chap.~4.1]{MalP20}, $\ProjVhfK \varphi$ decays exponentially. Moreover, in \Cref{lem:ProjfProjfk} we show that this also holds for the time derivatives $\pt \bigl( \ProjVhfK \varphi\bigr)$. This motivates the localization of $\ProjVhf$, $\ProjVhms$, and $\Vhypmst$.

Based on the notation introduced in \cref{eq:notationPatches}, the localized element-wise contributions $\ProjVhfkKt \colon \Vhyph \to \Vhypf\bigl(\patch{K}\bigr)$ are given by
\begin{align} \label{eq:defProjVhfkKt}
  \sprod{\Vhypt,\patch{K}}{\ProjVhfkKt \varphi}{\psi} &= \sprod{\Vhypt,K}{\varphi}{\psi}, & t &\in [0,T], \, \varphi \in \Vhyph, \, \psi \in \Vhypf\bigl(\patch{K}\bigr),
\end{align}
using the localized fine space $\Vhypf\bigl(\patch{K}\bigr) = \bigl\{ \varphi \in \Vhypf \mid \operatorname{supp} \varphi \subset \patch{K}\bigr\}$.

Recombination yields the localized projection $\ProjVhfk = \sum_{K \in \TriaH} \ProjVhfkK$. Again, we also use the mapping for time-dependent functions $\ProjVhfk \colon C^1([0,T],\Vhyph) \to C^1([0,T],\Vhypf)$. Moreover, similarly to \cref{eq:defFESpacemst} and \cref{eq:defProjVhmst} we define the localized time-dependent multiscale space
\begin{align} \label{eq:defFESpacemskt}
  \Vhypmskt &= \bigl(\Id - \ProjVhfkt\bigr) \VhypH, & t &\in [0,T],
\end{align}
and $\ProjVhmskt \in \Lin{\Vhyph,\Vhypmskt}$ as well as $\ProjVhmsk \colon C^1([0,T],\Vhyph) \to C^1([0,T],\Vhyph)$ via
\begin{align} \label{eq:defProjVhmskt}
  \sprod{\Vhypt}{\ProjVhmskt \varphi}{\psi} &= \sprod{\Vhypt}{\varphi}{\psi}, & t &\in [0,T], \, \varphi \in \Vhyph, \, \psi \in \Vhypmskt.
\end{align}
Additionally, we introduce $\ProjVhmsktildet \in \Lin{\Vhyph,\Vhypmskt}$ and correspondingly for time-dependent functions $\ProjVhmsktilde \colon C^1([0,T],\Vhyph) \to C^1([0,T],\Vhyph)$ by
\begin{align} \label{eq:defProjVhmsktildet}
  \ProjVhmsktilde &= \bigl( \Id - \ProjVhfk \bigr) \InterpolH.
\end{align}
In contrast to the (Ritz) projection $\ProjVhmsk$, the projection $\ProjVhmsktilde$  is not orthogonal to $\Vhypmskt$ with respect to the inner product of $\Vhypt$. We emphasize that, nevertheless, the structure of $\ProjVhmsktilde$ is very similar to the ideal multiscale projection $\ProjVhms = \Id - \ProjVhf$. This is a key aspect of our error analysis.

Finally, we introduce the space $\Hhypmskt$ consisting of the same functions as $\Vhypmskt$, but equipped with the inner product of $\Hhyp$, as well as $\Xhypmskt = \Vhypmskt \times \Hhypmskt$. The corresponding time-dependent multiscale projections $\ProjHhmskt \in \Lin{\Hhyph,\Hhypmskt}$ and $\ProjHhmsk \colon C^1([0,T],\Hhyph) \to C^1([0,T],\Hhyph)$ satisfy
\begin{align} \label{eq:defProjHhmskt}
  \sprod{\Hhyp}{\ProjHhmskt \varphi}{\psi} &= \sprod{\Hhyp}{\varphi}{\psi}, & t &\in [0,T], \, \varphi \in \Hhyph, \, \psi \in \Hhypmskt.
\end{align}

\begin{remark} \label{eq:timeDependentMultiscaleSpaces}
	The time-dependency of $\aeps$ requires the multiscale spaces to be time-dependent as well, which largely influences the fully discrete scheme and its error analysis below.
	In the special case where $\aeps(t,x)=a_{1,\eps}(x)a_2(t)$, the localized element projections $\ProjVhfkKt$ in \cref{eq:defProjVhfkKt} can be equivalently formulated with $a_{1,\eps}$ only, so that they become time-independent. Hence, in this special case, we can work with time-independent multiscale spaces and the following error analysis can be simplified. 
	We illustrate this fact also in the numerical experiments.
\end{remark}

\subsection{Fully discrete scheme} \label{subsec:fullyDiscreteScheme}
Following the method-of-lines approach, we now present the fully discrete localized orthogonal decomposition method for the wave equation with time-dependent multiscale coefficients \cref{eq:hypeps} based on the backward Euler scheme for the discretization in time. To this end, let $\tau > 0$ denote the fixed time step, $N \in \N$ with $N \tau \leq T$, and $\timed{m} = m \tau$ for all $m \leq N$. We introduce for $m = 0,\dots,N$ the short notation $\Xhypd[m] = \Xhypt[\timed{m}]$, $\Xhypmskd[m] = \Xhypmskt[\timed{m}]$, and the projections $\ProjXhmskd[m], \RefXhmsktilded[m] \in \Lin{\Xhyph,\Xhypmskd[m]}$ given by
\begin{align} \label{eq:notationProjs}
  \ProjXhmskd[m] &= \begin{pmatrix} \ProjVhmskt[\timed{m}] \\ \ProjHhmskt[\timed{m}] \end{pmatrix}, & \RefXhmsktilded[m] &= \begin{pmatrix} \ProjVhmsktildet[\timed{m}] \\ \ProjVhmsktildet[\timed{m}] \end{pmatrix}.
\end{align}
Moreover, we define
\begin{align} \label{eq:notationDiscreteOps}
 \Ophypmskd[m] &= \ProjXhmskd[m] \Ophyph(\timed{m}), & \rhshypmskd[m] &= \ProjXhmskd[m] \rhshyph(\timed{m}), & \solhypmskinit &= \ProjXhmskd[0] \solhyphinit.
\end{align}
The fully discrete scheme then reads
\begin{align} \label{eq:backwardEulerLOD}
  \solhypmskd[n+1] &= \ProjXhmskd[n+1] \solhypmskd[n] + \tau \Ophypmskd[n+1] \solhypmskd[n+1] + \tau \rhshypmskd[n+1], & n &=0,\dots,N-1.
\end{align}
Note that we apply the projection $\ProjXhmskd[n+1]$ to the previous approximation $\solhypmskd[n]$ in order to ensure $\solhypmskd[n+1] \in \Xhypmskd[n+1]$. Thus, the right-hand side of \cref{eq:backwardEulerLOD} is well defined.

We now state our main result, which yields wellposedness and a rigorous error estimate for the fully discrete scheme.
\begin{theorem} \label{thm:backwardEulerError}
  Let $\solhyph = (\solhypsech, \pt \solhypsech)$ be the solution of \cref{eq:Cauchyhypdisc} with
  \begin{align} \label{eq:regularityforEulererrornonautonomouswave}
    \solhypsech &\in C^3([0,T], \Hhyph) \cap C^2([0,T], \Vhyph)
  \end{align}
bounded independent of $\eps$.
  Moreover, let $\patchsize \geq \bigl(1 + \frac{\spacedim}{2}\bigr)\abs{\log H}/\abs{\log \mu}$. Then, the approximations $\solhypmskd[n] = (\solhypsecmskd[n], \ptsolhypsecmskd[n])$ obtained by the fully discrete localized orthogonal decomposition method \cref{eq:backwardEulerLOD} for the wave equation with time-dependent multiscale coefficients \cref{eq:hypeps} satisfy for $n = 1,\dots,N$ the error estimate
  \begin{align} \label{eq:Eulererrorestimate}
    \norm{\solhypsecmskd[n] - \solhypsech(\timed{n})}_{\Vhyp} + \norm{\ptsolhypsecmskd[n] - \pt \solhypsech(\timed{n})}_{\Hhyp} &\leq \mathcal{C}_{\solhypsec,\rhshypsec} \expp{\mathcal{C}_\acoeff \timed{n}} \bigl(\tau + H\bigr),
  \end{align}
  where the constants $\mathcal{C}_{\solhypsec,\rhshypsec}, \mathcal{C}_\acoeff > 0$ depend on $\alc$, $\auc$, $\ptauc$, and the mesh regularity, but not on the variations of $\aeps$. Further, $\mathcal{C}_{\solhypsec,\rhshypsec}$ depends on \cref{eq:regularityforEulererrornonautonomouswave}, but not on $\eps$. The constant $0 < \mu < 1$ is given by \Cref{lem:ProjfProjfk}.
\end{theorem}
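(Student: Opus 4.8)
The plan is to combine the classical a priori analysis of the backward Euler scheme for non-autonomous linear wave equations (in the spirit of \cite{Bal86}) with the LOD stability and approximation machinery (in the spirit of \cite{AbdH17}), the key new ingredient being the special projection $\ProjVhmsktilde$ from \cref{eq:defProjVhmsktildet} and the exponential decay of $\pt(\ProjVhfK\varphi)$ from \Cref{lem:ProjfProjfk}. First I would set up the error splitting: write $\solhypsecmskd[n]-\solhypsech(\timed{n}) = \bigl(\solhypsecmskd[n]-\ProjVhmsktildet[\timed{n}]\solhypsech(\timed{n})\bigr) + \bigl(\ProjVhmsktildet[\timed{n}]\solhypsech(\timed{n})-\solhypsech(\timed{n})\bigr)$, and similarly for the velocity component using the second row of $\RefXhmsktilded[n]$. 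The second (projection) term is controlled purely by approximation properties of $\ProjVhmsktilde$: using \cref{eq:defProjVhmsktildet}, the interpolation bounds \cref{eq:InterpolhHLocalBounds}, the norm equivalence \cref{eq:VVtnormequivalence}, and the exponential decay (localization error $\lesssim \mu^k$) from \Cref{lem:ProjfProjfk}, one gets $\lesssim (H+\mu^k)\norm{\solhypsech}_{\Vhyp}$, and the choice $k\gtrsim (1+\spacedim/2)|\log H|/|\log\mu|$ absorbs the $\mu^k$ term into $H$. One also needs the $H^1$-in-time analogue, i.e. bounds on $\pt$ of this projection error, which is precisely where \Cref{lem:ProjfProjfk} for the time derivatives is invoked, together with the regularity \cref{eq:regularityforEulererrornonautonomouswave}.

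Next I would derive the error equation for the discrete part $\errmskd[n] := \solhypmskd[n] - \RefXhmsktilded[n]\solhyph(\timed{n}) \in \Xhypmskd[n]$. Inserting the exact discrete solution into the scheme \cref{eq:backwardEulerLOD} produces a consistency defect $\defectEuler[n+1]$ consisting of: (i) the backward Euler time-discretization residual $\tfrac{1}{\tau}\int$-type remainder, bounded by $\tau$ times $C^2$/$C^3$ norms of $\solhypsech$ via Taylor expansion; (ii) terms measuring the mismatch between $\ProjXhmskd[n+1]\Ophyph(\timed{n+1})$ applied to $\RefXhmsktilded[n+1]\solhyph$ and the exact $\Ophyph(\timed{n+1})\solhyph$ — here I would use that $\ProjVhmsktilde$ mimics $\ProjVhms=\Id-\ProjVhf$ so that the "non-conformity" of $\ProjVhmsktilde$ only costs another $H+\mu^k$ after testing against discrete functions and using the Galerkin orthogonality built into \cref{eq:defProjVhfkKt}; (iii) the difference $\ProjXhmskd[n+1]\solhypmskd[n] - \solhypmskd[n]$ caused by the time-dependence of the multiscale space (the spaces at $\timed{n}$ and $\timed{n+1}$ differ), which I expect to be bounded by $\tau\,\ptauc$ times appropriate norms using the $C^1$-in-time regularity of $\aeps$ in \cref{eq:aepsBounds} and the time-derivative decay from \Cref{lem:ProjfProjfk}.

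Then comes the stability/energy step for $\errmskd[n]$. The scheme \cref{eq:backwardEulerLOD} restricted to $\Xhypmskd[n+1]$ should, after testing with $\errmskd[n+1]$ in the $\Xhypd[n+1]$ inner product and exploiting the skew-symmetry \cref{eq:OphyphSkew} of $\Ophyph(\timed{m})$ (which passes to the projected operator $\Ophypmskd[m]$ up to controllable terms), give an estimate of the form $\norm{\errmskd[n+1]}_{\Xhypd[n+1]}^2 \le (1+C\tau)\norm{\errmskd[n]}_{\Xhypd[n]}^2 + C\tau\norm{\defectEuler[n+1]}^2$, where the factor $(1+C\tau)$ absorbs both the backward Euler dissipation sign and the change of the weighted inner product from $\timed{n}$ to $\timed{n+1}$ — the latter contributing a factor $\le 1 + \ptauc\tau/\alc$ by \cref{eq:VVtnormequivalence} and $|\pt\aeps|\le\ptauc$. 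A discrete Grönwall argument over $n$ steps then yields $\norm{\errmskd[n]}\le \expp{\mathcal{C}_a\timed{n}}\sum_{m\le n}\tau\norm{\defectEuler[m]}$, and summing the defect bounds $\norm{\defectEuler[m]}\lesssim \mathcal{C}_{u,f}(\tau+H)$ gives the claim after combining with the projection error; wellposedness of \cref{eq:backwardEulerLOD} follows because $\Id-\tau\Ophypmskd[n+1]$ is invertible on the finite-dimensional $\Xhypmskd[n+1]$ by the skew-symmetry (its symmetric part is the identity).

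The main obstacle I anticipate is handling the time-dependence of the multiscale spaces consistently throughout the energy argument: every term involving $\ProjXhmskd[n+1]$ acting on a quantity naturally living in $\Xhypmskd[n]$ introduces a "space mismatch'' that must be shown to be $O(\tau)$ rather than $O(1)$, and this hinges on having uniform (in $\eps$, and with the correct exponential localization) control of $\pt\ProjVhmsk$, $\pt\ProjVhmsktilde$, and $\pt\ProjHhmsk$ — i.e. on \Cref{lem:ProjfProjfk} being strong enough. A secondary subtlety is ensuring that replacing the Ritz-type projection $\ProjVhmsk$ by the non-orthogonal $\ProjVhmsktilde$ in the reference quantity $\RefXhmsktilded[n]\solhyph(\timed{n})$ does not destroy the cancellations in the defect; this is why the excerpt stresses that $\ProjVhmsktilde$ is structurally close to $\Id-\ProjVhf$, and I would lean on exactly that analogy to recover the needed orthogonality-up-to-$\mu^k$ properties.
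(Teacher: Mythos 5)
Your plan coincides with the paper's proof in all essential respects: the reference quantity $\RefXhmsktilded[n]\solhyph(\timed{n})$ built from the non-orthogonal projection $\ProjVhmsktilde$, the error recursion with a backward-Euler defect plus an operator-mismatch term controlled via the Galerkin orthogonality in \cref{eq:defProjVhfkKt} and \Cref{lem:ProjfProjfk}, the energy test with $\errmskd[n+1]$ using skew-symmetry, the factor $\expp{\ptauc\tau}$ from the change of weighted norm, and a discrete Gronwall argument. The only notable organizational difference is that the paper never needs your $O(\tau)$ bound on the space-mismatch term: since $\ProjXhmskd[n+1]$ is the $\Xhypd[n+1]$-orthogonal projection onto $\Xhypmskd[n+1]$, the contribution $(\ProjXhmskd[n+1]-\Id)\RefXhmsktilded[n]\solhyphd[n]$ vanishes when tested against $\errmskd[n+1]$, and the remaining $\tfrac{1}{\tau}\int\pt\bigl((\Id-\RefXhmsktildet)\solhyph\bigr)\dt$ term is bounded by $O(H)$ via \cref{eq:ptIdProjmsktildeVh}--\cref{eq:ptIdProjmsktildeHh}.
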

The proof of the main result is given at the end of the next section. We conclude this section with the following remarks.
\begin{remark}
  As stated in \Cref{rem:WellposednessEps}, wellposedness of wave equations with time-dependent coefficients of the form \cref{eq:hypeps} and \cref{eq:Cauchyhypdisc} is studied in \cite{Gil72}. In particular, the author derives compatibility conditions on the initial data and the right-hand side such that the regularity assumption \cref{eq:regularityforEulererrornonautonomouswave} on $\solhypsech$ holds. 
  In order to have $\eps$-independent a priori bounds on $u_h$ in the associated norms, it is additionally required that the initial values and the right-hand side satisfy $\eps$-independent bounds in appropriate norms, cf.~\cite{AbdH17} for a detailed discussion in the case of time-independent $\aeps$.
  For instance, it is shown in \cite{Gil72} that the assumption of well-prepared data is satisfied for vanishing initial data and $\rhshypsech \in W^{2,1}([0,T],\Hhyph)$.
\end{remark}
\begin{remark}
  Similar arguments as in the proof of \Cref{thm:backwardEulerError} also yield an error estimate for the spatially discrete problem without the discretization in time. However, since wellposedness thereof is much more involved, we refrain from presenting the details here.
\end{remark}
\begin{remark} \label{rem:implicitMidpoint}
  We emphasize that our analysis for the backward Euler scheme offers a good starting point for the derivation of higher-order schemes. For instance, correspondingly to \cref{eq:backwardEulerLOD} we propose the following fully discrete scheme for the wave equation with time-dependent multiscale coefficients \cref{eq:hypeps} based on the implicit midpoint rule, which is given by
  \begin{align} \label{eq:implicitMidpointLOD}
    \begin{aligned}
      \solhypmskd[n+1/2] &= \ProjXhmskd[n+1/2] \solhypmskd[n] + \tfrac{\tau}{2} \Ophypmskd[n+1/2] \solhypmskd[n+1/2] + \tfrac{\tau}{2} \rhshypmskd[n+1/2], \\
      \solhypmskd[n+1] &= 2 \solhypmskd[n+1/2] - \solhypmskd[n],
    \end{aligned}
    && n &=0,\dots,N-1.
  \end{align}
  The discrete operators are defined as in \cref{eq:notationProjs} and \cref{eq:notationDiscreteOps} with $\timed{m+1/2} = \timed{m} + \frac{\tau}{2}$. In particular, \cref{eq:implicitMidpointLOD} implies $\solhypmskd[n+1/2] \in \Xhypmskd[n+1/2]$, which is crucial for the right-hand side to be well defined, but not $\solhypmskd[n+1] \in \Xhypmskd[n+1]$. We point out that in our numerical experiments in \Cref{sec:NumExp} this scheme is second-order convergent. However, since the corresponding analysis is much more involved, we focus in the present paper on the analysis of the backward Euler scheme.
\end{remark}

\section{Error analysis} \label{sec:errorAnalysis}
The aim of this section is to provide a rigorous proof for our main result \Cref{thm:backwardEulerError}. To this end, in the first subsection we investigate the approximation properties of the projection operators. Based on these estimates, we finally conclude an error estimate for the fully discrete scheme in the second subsection.

\subsection{Approximation properties of projections}
In the following, we study the projections introduced in \Cref{subsec:lod}. In particular, this includes the derivation of estimates for the corresponding time derivatives.

To begin with, the definition \cref{eq:defProjVhmsktildet} of $\ProjVhmsktilde$ and the identity $\ProjVhms \bigl(\Id - \InterpolH\bigr) = 0$ in $\Vhyph$ imply
\begin{align} \label{eq:IdProjVhmsktildetIdentity}
  \ProjVhmsktilde - \Id = \ProjVhms - \Id + \bigl( \ProjVhf - \ProjVhfk \bigr) \InterpolH.
\end{align}
Thus, in the following lemmas we study the ideal multiscale projection $\ProjVhms$ and the perturbation error due to the localization $\ProjVhfk$ of the finescale projection $\ProjVhf$. Finally, we conclude bounds for the non-orthogonal multiscale projection $\ProjVhmsktilde$.

We start with the ideal multiscale projection $\ProjVhms$ introduced in \cref{eq:defProjVhmst}. 
\begin{lemma} \label{lem:IdProjmsVh}
  Let $t \in [0,T]$ and $\varphi \in C^1([0,T],\Vhyph)$ with $\Ophypsech(t) \varphi(t) \in \Hhyph$. Then, we have
  \begin{align}
    \norm{\bigl(\Id - \ProjVhmst\bigr) \varphi(t)}_{\Vhyp} &\lesssim H \norm{\Ophypsech(t) \varphi(t)}_{\Hhyp}, \label{eq:IdProjmsVh} \\
    \norm{\pt\bigl((\Id - \ProjVhms) \varphi\bigr)(t)}_{\Vhyp} &\lesssim H \Bigl(\norm{\Ophypsech(t) \varphi(t)}_{\Hhyp} + \norm{\pt \bigl( \Ophypsech(t) \varphi(t) \bigr)}_{\Hhyp}\Bigr), \label{eq:ptIdProjmsVh}
  \end{align}
  where the hidden constant might depend on $\alc$, $\auc$, $\ptauc$, and the mesh regularity, but not on the spatial variations of $\aeps$.
\end{lemma}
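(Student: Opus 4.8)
The plan is to reduce the first estimate~\cref{eq:IdProjmsVh} to the standard LOD approximation property and then obtain the time-derivative bound~\cref{eq:ptIdProjmsVh} by differentiating the defining relation of $\ProjVhmst$ in time. For~\cref{eq:IdProjmsVh}, I would exploit that $\Id - \ProjVhmst$ is, for fixed $t$, the $\sprod{\Vhypt}{\cdot}{\cdot}$-orthogonal projection onto $\Vhypf = \ker \InterpolH$, i.e.\ $\bigl(\Id - \ProjVhmst\bigr)\varphi(t) = \ProjVhft \varphi(t)$. Testing the defining equation~\cref{eq:defProjVhft} with $\psi = \ProjVhft \varphi(t) \in \Vhypf$ and using $\InterpolH \psi = 0$ gives
\begin{align*}
  \norm{\ProjVhft \varphi(t)}_{\Vhypt}^2 = \sprod{\Vhypt}{\varphi(t)}{\ProjVhft \varphi(t)} = \sprod{\Hhyp}{\Ophypsech(t)\varphi(t)}{\psi} = \sprod{\Hhyp}{\Ophypsech(t)\varphi(t)}{\psi - \InterpolH \psi},
\end{align*}
where the second equality is~\cref{eq:defOphypsech} (valid since $\Ophypsech(t)\varphi(t) \in \Hhyph$). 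Bounding the right-hand side with Cauchy--Schwarz, the interpolation estimate~\cref{eq:InterpolhHBounds} (to get the factor $H$) and the norm equivalence~\cref{eq:VVtnormequivalence}, then dividing by $\norm{\ProjVhft \varphi(t)}_{\Vhypt}$ and using~\cref{eq:VVtnormequivalence} once more to pass back to $\norm{\cdot}_{\Vhyp}$, yields~\cref{eq:IdProjmsVh}.

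For~\cref{eq:ptIdProjmsVh} I would differentiate~\cref{eq:defProjVhft}. Writing $w(t) = \ProjVhft \varphi(t) = (\Id - \ProjVhmst)\varphi(t)$, the relation $\sprod{L^2}{\aeps(t)\grad w(t)}{\grad \psi} = \sprod{L^2}{\aeps(t)\grad \varphi(t)}{\grad \psi}$ holds for all $\psi \in \Vhypf$; since $\Vhypf$ is time-independent, differentiating in $t$ gives, for all $\psi \in \Vhypf$,
\begin{align*}
  \sprod{L^2}{\aeps(t)\grad \pt w(t)}{\grad \psi} = \sprod{L^2}{\aeps(t)\grad \pt\varphi(t)}{\grad \psi} + \sprod{L^2}{\pt\aeps(t)\grad(\varphi(t) - w(t))}{\grad \psi}.
\end{align*}
The first two terms show $\pt w(t) - \ProjVhft \pt\varphi(t) \perp_{\Vhypt} \Vhypf$ up to the extra commutator term carrying $\pt\aeps$. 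Equivalently, $\pt w(t) = \ProjVhft\bigl(\pt\varphi(t)\bigr) + z(t)$ where $z(t) \in \Vhypf$ solves $\sprod{\Vhypt}{z(t)}{\psi} = \sprod{L^2}{\pt\aeps(t)\grad(\varphi(t)-w(t))}{\grad\psi}$ for $\psi \in \Vhypf$. The term $\ProjVhft \pt\varphi(t) = (\Id - \ProjVhmst)\pt\varphi(t)$ is estimated by~\cref{eq:IdProjmsVh} applied to $\pt\varphi$, which needs $\Ophypsech(t)\pt\varphi(t) \in \Hhyph$; I would instead note $\Ophypsech(t)\pt\varphi(t) = \pt\bigl(\Ophypsech(t)\varphi(t)\bigr) - (\pt\Ophypsech(t))\varphi(t)$ and handle the two pieces so that the bound involves $\norm{\Ophypsech(t)\varphi(t)}_{\Hhyp}$ and $\norm{\pt(\Ophypsech(t)\varphi(t))}_{\Hhyp}$ as on the right of~\cref{eq:ptIdProjmsVh}. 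For $z(t)$, testing with $\psi = z(t)$, using $\abs{\pt\aeps} \le \ptauc$ from~\cref{eq:aepsBounds}, Cauchy--Schwarz, and $\InterpolH z(t) = 0$ together with~\cref{eq:InterpolhHBounds} to produce the factor $H$, bounds $\norm{z(t)}_{\Vhyp} \lesssim H \norm{\varphi(t) - w(t)}_{\Vhyp} = H\norm{\ProjVhft\varphi(t)}_{\Vhyp} \lesssim H^2 \norm{\Ophypsech(t)\varphi(t)}_{\Hhyp}$ by~\cref{eq:IdProjmsVh}, which is of the required order. Collecting the pieces gives~\cref{eq:ptIdProjmsVh}.

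The main obstacle is the differentiation-in-time argument: one must argue carefully that $w$ is indeed $C^1$ in time with $\pt w$ characterized by the differentiated variational problem (this uses the $C^1$-in-time regularity of $\aeps$ from~\cref{eq:aepsBounds} and that $\Vhypf$ does not move), and one must route the term $\Ophypsech(t)\pt\varphi(t)$ through the product rule so that only $\Ophypsech(t)\varphi(t)$ and its time derivative — not a separate bound on $\pt\varphi$ — appear on the right. The commutator term involving $\pt\aeps$ is the genuinely new ingredient compared to the time-invariant LOD theory, but it is benign: because it still lives in $\Vhypf$ and $\InterpolH$ annihilates $\Vhypf$, it inherits the full power of $H$ and is in fact higher order. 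Everything else is a combination of the standard LOD orthogonality trick with the norm equivalence~\cref{eq:VVtnormequivalence} and the interpolation bounds~\cref{eq:InterpolhHBounds}.
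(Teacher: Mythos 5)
Your argument for \cref{eq:IdProjmsVh} is exactly the paper's: test \cref{eq:defProjVhft} with $\psi=\ProjVhft\varphi(t)$, use $\InterpolH\psi=0$ and \cref{eq:InterpolhHBounds} together with the norm equivalence \cref{eq:VVtnormequivalence}. That part is fine. For \cref{eq:ptIdProjmsVh} you also start from the right place (differentiating the defining relation in time, which is legitimate since $\Vhypf$ is time-independent and $\aeps\in C^1$ in time), but your decomposition $\pt w = \ProjVhft(\pt\varphi)+z$ destroys a cancellation that the estimate depends on, and neither piece is individually of order $H$.

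Concretely, there are two problems. First, $\varphi(t)-w(t)=(\Id-\ProjVhft)\varphi(t)=\ProjVhmst\varphi(t)$, \emph{not} $\ProjVhft\varphi(t)$ as you claim; its $\Vhyp$-norm is only $O(\norm{\varphi(t)}_{\Vhyp})$, and the source term of your $z$-problem, $\sprod{\Hhyp}{\pt\aeps\grad(\varphi-w)}{\grad\psi}$, pairs against $\grad\psi$ rather than $\psi$, so $\InterpolH z=0$ buys you no factor of $H$ (the coefficient is rough, so you cannot integrate by parts to land on $(\Id-\InterpolH)\psi$ in $\Hhyp$). Testing with $\psi=z$ only gives $\norm{z}_{\Vhyp}\lesssim\norm{\varphi}_{\Vhyp}$. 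Second, bounding $\ProjVhft(\pt\varphi)$ via \cref{eq:IdProjmsVh} produces $H\norm{\Ophypsech(t)\pt\varphi(t)}_{\Hhyp}$, and the piece $\bigl(\pt\Ophypsech(t)\bigr)\varphi(t)$ in your product-rule splitting is precisely the quantity the paper warns (right after the lemma) is \emph{not} uniformly bounded in $h$; testing it against $\psi\in\Vhypf$ again yields $\sprod{\Hhyp}{\pt\aeps\grad\varphi}{\grad\psi}$ with no $H$. These two $O(1)$ contributions carry opposite signs and cancel when summed — which is why the paper does not split: it writes the single identity \cref{eq:ptProjVhft},
\begin{align}
  \sprod{\Hhyp}{\aeps(t)\grad\pt\bigl(\ProjVhf\varphi\bigr)(t)}{\grad\psi}
  = \pt\sprod{\Hhyp}{\aeps(t)\grad\varphi(t)}{\grad\psi}
  - \sprod{\Hhyp}{\pt\aeps(t)\grad\bigl(\ProjVhft\varphi(t)\bigr)}{\grad\psi},
\end{align}
so that the $\pt\aeps$-term carries $\grad\bigl(\ProjVhft\varphi(t)\bigr)$, which is already $O\bigl(H\norm{\Ophypsech(t)\varphi(t)}_{\Hhyp}\bigr)$ by \cref{eq:IdProjmsVh}, while the first term is handled through $(\Id-\InterpolH)\psi$. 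You need to regroup your two pieces into this form; as written, your proof does not deliver the factor $H$.
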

\begin{proof}
  The estimate \cref{eq:IdProjmsVh} follows directly from $\ProjVhf = \Id - \ProjVhms$, the norm equivalence \cref{eq:VVtnormequivalence}, and \cref{eq:InterpolhHBounds}. To prove \cref{eq:ptIdProjmsVh}, we first obtain due to $\pt\bigl(\ProjVhf \varphi\bigr) \in C^1([0,T],\Vhypf)$
  \begin{align}
    \norm{\pt\bigl((\Id - \ProjVhms) \varphi\bigr)(t)}_{\Vhypt} &= \sup_{\stackrel{\psi \in \Vhypf}{\norm{\psi}_{\Vhypt}=1}} \sprod{\Hhyp}{\aeps(t) \grad \pt\bigl(\ProjVhf \varphi\bigr)(t)}{\grad \psi}.
  \end{align}
  The product rule together with \cref{eq:defProjVhft} implies
  \begin{align} \label{eq:ptProjVhft}
    \begin{aligned}
      \MoveEqLeft \sprod{\Hhyp}{\aeps(t) \grad \pt\bigl(\ProjVhf \varphi\bigr)(t)}{\grad \psi} \\
      &= \pt \sprod{\Hhyp}{\aeps(t) \grad \varphi(t)}{\grad \psi} - \sprod{\Hhyp}{\pt \aeps(t) \grad\bigl(\ProjVhft \varphi(t)\bigr)}{\grad \psi}.
    \end{aligned}
  \end{align}
  For the first term, we derive with the definition \cref{eq:defOphypsech} of $\Ophypsech$, $\psi \in \ker \InterpolH$, and \cref{eq:InterpolhHBounds} the bound
  \begin{align}
    \abs{\pt \sprod{\Hhyp}{\aeps(t) \grad \varphi(t)}{\grad \psi}} &\leq \norm{\pt \bigl( \Ophypsech(t) \varphi(t) \bigr)}_{\Hhyp} \norm{(\Id - \InterpolH) \psi}_{\Hhyp} \\
    &\lesssim H \norm{\pt \bigl( \Ophypsech(t) \varphi(t) \bigr)}_{\Hhyp} \norm{\psi}_{\Vhyp}.
  \end{align}
  For the second term, the Cauchy--Schwarz inequality yields
  \begin{align}
    \sprod{\Hhyp}{\pt \aeps(t) \grad\bigl(\ProjVhft \varphi(t)\bigr)}{\grad \psi} &\leq \norm{\pt \aeps(t)}_{L^\infty(\domain)} \norm{\bigl( \Id - \ProjVhmst \bigr) \varphi(t)}_{\Vhyp} \norm{\psi}_{\Vhyp}.
  \end{align}
  Finally, \cref{eq:IdProjmsVh} and the norm equivalence \cref{eq:VVtnormequivalence} conclude the proof.
\end{proof}

Although $\norm{\Ophypsech(t) \varphi}_{\Hhyp}$ is in general not uniformly bounded in $h$, we point out that the finite element solution $\solhyph = (\solhypsech, \pt \solhypsech) \in C^1([0,T],\Xhyph)$ of \cref{eq:Cauchyhypdisc} satisfies
\begin{align} \label{eq:boundOphypsechsolhypsech}
  \norm{\Ophypsech(t) \solhypsech(t)}_{\Hhyp} = \norm{\ptt \solhypsech(t) - \rhshypsech(t)}_{\Hhyp}.
\end{align}
Thus, for $\varphi = \solhypsech$ and $\rhshyph \in C([0,T],\Xhyph)$ the right-hand side of \cref{eq:IdProjmsVh} is uniformly bounded in $h$. Moreover, taking the derivative of \cref{eq:Cauchyhypdisc} with respect to time, we obtain
\begin{align} \label{eq:boundptOphypsechsolhypsech}
  \norm{\pt \bigl(\Ophypsech(t) \solhypsech(t)\bigr)}_{\Hhyp} &= \norm{\pttt \solhypsech(t) - \pt \rhshypsech(t)}_{\Hhyp}.
\end{align}
Thus, for $\varphi = \solhypsech$ the right-hand side of \cref{eq:ptIdProjmsVh} is also uniformly bounded in $h$ if we have $\solhyph \in C^2([0,T],\Xhyph)$ and $\rhshyph \in C^1([0,T],\Xhyph)$.

However, note that for time-dependent multiscale coefficients the same trick is not feasible for $\varphi = \pt \solhypsech$, since the product rule yields
\begin{align}
  \norm{\Ophypsech(t) \pt \solhypsech(t)}_{\Hhyp} &= \norm{\pttt \solhypsech(t) - \pt \rhshypsech(t) - \pt \bigl(\Ophypsech(t)\bigr) \solhypsech(t)}_{\Hhyp},
\end{align}
but $\norm{\pt \bigl(\Ophypsech(t)\bigr) \solhypsech(t)}_{\Hhyp}$ is in general not uniformly bounded in $h$. Thus, we provide alternative bounds in the following lemma using only the energy norm on the right-hand side.
\begin{lemma} \label{lem:IdProjmsHh}
  Let $t \in [0,T]$ and $\varphi \in C^1([0,T],\Vhyph)$. Then, we have
  \begin{align}
    \norm{\bigl(\Id - \ProjVhmst\bigr) \varphi(t)}_{\Hhyp} &\lesssim H \norm{\varphi(t)}_{\Vhyp}, \label{eq:IdProjmsHh} \\
    \norm{\pt\bigl((\Id - \ProjVhms) \varphi\bigr)(t)}_{\Hhyp} &\lesssim H \bigl(\norm{\varphi(t)}_{\Vhyp} + \norm{\pt \varphi(t)}_{\Vhyp}\bigr), \label{eq:ptIdProjmsHh}
  \end{align}
  where the hidden constant might depend on $\alc$, $\auc$, $\ptauc$, and the mesh regularity, but not on the spatial variations of $\aeps$.
\end{lemma}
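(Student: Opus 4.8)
The plan is to reduce both estimates to the interpolation-error bound \cref{eq:InterpolhHBounds} applied \emph{inside} the finescale space $\Vhypf = \ker \InterpolH$, using the identity $\Id - \ProjVhmst = \ProjVhft$ (recall $\ProjVhms = \Id - \ProjVhf$) together with the fact that $\Vhypf$ does \emph{not} depend on $t$. The latter is precisely what lets \cref{eq:InterpolhHBounds} be applied not only to $\ProjVhft\varphi(t)$ but also to its time derivative, so that — unlike in \Cref{lem:IdProjmsVh} — only the energy norm appears on the right-hand side.

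For \cref{eq:IdProjmsHh}: since $\ProjVhft\varphi(t) \in \Vhypf = \ker\InterpolH$ we have $\InterpolH \ProjVhft\varphi(t) = 0$, so \cref{eq:InterpolhHBounds} gives $\norm{\ProjVhft\varphi(t)}_{\Hhyp} = \norm{(\Id - \InterpolH)\ProjVhft\varphi(t)}_{\Hhyp} \lesssim H\norm{\ProjVhft\varphi(t)}_{\Vhyp}$. Because $\ProjVhft$ is the orthogonal projection onto $\Vhypf$ with respect to $\sprod{\Vhypt}{\cdot}{\cdot}$ (see \cref{eq:defProjVhft}), the norm equivalence \cref{eq:VVtnormequivalence} yields $\norm{\ProjVhft\varphi(t)}_{\Vhyp} \lesssim \norm{\ProjVhft\varphi(t)}_{\Vhypt} \leq \norm{\varphi(t)}_{\Vhypt} \lesssim \norm{\varphi(t)}_{\Vhyp}$, and combining the two estimates proves the claim.

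For \cref{eq:ptIdProjmsHh}: the curve $t \mapsto \ProjVhf\varphi(t)$ takes values in the fixed finite-dimensional subspace $\Vhypf$, hence $\pt\bigl(\ProjVhf\varphi\bigr)(t) \in \Vhypf = \ker\InterpolH$ as well, and \cref{eq:InterpolhHBounds} again gives $\norm{\pt\bigl((\Id - \ProjVhms)\varphi\bigr)(t)}_{\Hhyp} = \norm{\pt\bigl(\ProjVhf\varphi\bigr)(t)}_{\Hhyp} \lesssim H\norm{\pt\bigl(\ProjVhf\varphi\bigr)(t)}_{\Vhyp}$. It then remains to establish the stability estimate $\norm{\pt\bigl(\ProjVhf\varphi\bigr)(t)}_{\Vhyp} \lesssim \norm{\varphi(t)}_{\Vhyp} + \norm{\pt\varphi(t)}_{\Vhyp}$. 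For this I would differentiate the defining relation \cref{eq:defProjVhft} in time — as in the proof of \Cref{lem:IdProjmsVh}, but without rewriting the leading term through $\Ophypsech$ — to get, for every time-independent $\psi \in \Vhypf$,
\begin{align*}
  \sprod{\Vhypt}{\pt\bigl(\ProjVhf\varphi\bigr)(t)}{\psi} = \sprod{\Hhyp}{\aeps(t)\grad\pt\varphi(t)}{\grad\psi} + \sprod{\Hhyp}{\pt\aeps(t)\grad\bigl((\Id - \ProjVhft)\varphi(t)\bigr)}{\grad\psi},
\end{align*}
then test with $\psi = \pt\bigl(\ProjVhf\varphi\bigr)(t)$, bound the two terms by \cref{eq:aepsBounds} together with the $\Vhypt$-stability of $\Id - \ProjVhft = \ProjVhmst$ and the bound $\norm{\ProjVhmst\varphi(t)}_{\Vhyp} \lesssim \norm{\varphi(t)}_{\Vhyp}$ from the previous paragraph, divide by $\norm{\pt(\ProjVhf\varphi)(t)}_{\Vhypt}$, and invoke \cref{eq:VVtnormequivalence}.

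The main — and essentially the only — subtlety is to recognize that $\pt\bigl(\ProjVhf\varphi\bigr)(t)$ still lies in $\ker\InterpolH$, which rests on the time-independence of $\Vhypf$; this is what extracts the factor $H$ cheaply and reduces the $\Hhyp$-bound on $\pt\bigl(\ProjVhf\varphi\bigr)$ to a mere \emph{stability} bound in the energy norm. The extra term $\sprod{\Hhyp}{\pt\aeps(t)\grad((\Id - \ProjVhft)\varphi(t))}{\grad\psi}$ produced when the product rule hits $\aeps$ is then harmless, since $\abs{\pt\aeps} \leq \ptauc$ and $\Id - \ProjVhft$ is an orthogonal projection; in particular no non-uniformly-bounded quantity of the type $\norm{\Ophypsech(t)\varphi(t)}_{\Hhyp}$ appearing in \Cref{lem:IdProjmsVh} enters here.
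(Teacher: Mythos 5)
Your proof is correct and follows essentially the same route as the paper: both estimates are reduced to the interpolation bound \cref{eq:InterpolhHBounds} applied on $\Vhypf=\ker\InterpolH$ (using that $\Vhypf$ is time-independent, so $\pt(\ProjVhf\varphi)(t)\in\Vhypf$), combined with a $\Vhyp$-stability bound for $\pt\bigl(\ProjVhf\varphi\bigr)(t)$ obtained by differentiating \cref{eq:defProjVhft} in time and using \cref{eq:aepsBounds} and \cref{eq:VVtnormequivalence}. Your displayed identity is simply the paper's \cref{eq:ptProjVhft} with the total time derivative expanded, so the two arguments coincide.
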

\begin{proof}
  The bound \cref{eq:IdProjmsHh} follows from $\ProjVhf = \Id - \ProjVhms$, \cref{eq:InterpolhHBounds}, and \cref{eq:defProjVhft}. Furthermore, we obtain with $\ProjVhf \colon C^1([0,T],\Vhyph) \to C^1([0,T],\Vhypf)$ and \cref{eq:InterpolhHBounds}
  \begin{align}
    \norm{\pt\bigl(\ProjVhf \varphi\bigr)(t)}_{\Hhyp}
    &\lesssim H \norm{\pt\bigl(\ProjVhf \varphi\bigr)(t)}_{\Vhyp}.
  \end{align}
  Since \cref{eq:VVtnormequivalence}, \cref{eq:defProjVhft}, and \cref{eq:ptProjVhft} imply
  \begin{align}
    \norm{\pt\bigl(\ProjVhf \varphi\bigr)(t)}_{\Vhyp} &\lesssim \bigl(\norm{\varphi(t)}_{\Vhyp} + \norm{\pt \varphi(t)}_{\Vhyp}\bigr),
  \end{align}
  we also have \cref{eq:ptIdProjmsHh}.
\end{proof}

So far we only studied the ideal multiscale projection $\ProjVhms$. In order to obtain similar bounds for the localized projection $\ProjVhmsktilde$, we first study the error introduced by the localization.
\begin{proposition} \label{lem:ProjfProjfk}
  Let $t \in [0,T]$ and $\varphi \in C^1([0,T],\Vhyph)$. There is a constant $0 < \mu < 1$ depending on $\frac{\alc}{\auc}$ such that
  \begin{align}
    \norm{\bigl(\ProjVhft - \ProjVhfkt\bigr) \varphi(t)}_{\Vhyp} &\lesssim \patchsize^{\spacedim/2} \mu^\patchsize \norm{\varphi(t)}_{\Vhyp}, \label{eq:ProjfProjfk} \\
    \norm{\pt \bigl((\ProjVhf - \ProjVhfk) \varphi\bigr)(t)}_{\Vhyp} &\lesssim H^{-\spacedim/2} \mu^\patchsize \bigl(\norm{\varphi(t)}_{\Vhyp} + \norm{\pt \varphi(t)}_{\Vhyp}\bigr). \label{eq:ptProjfProjfk}
  \end{align}
  The hidden constant might depend on $\alc$, $\auc$, $\ptauc$, and the mesh regularity, but not on the variations of $\aeps$.
\end{proposition}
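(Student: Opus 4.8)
The plan is to establish the exponential decay of $(\ProjVhft - \ProjVhfkt)\varphi(t)$ and its time derivative by localizing the defect element by element and invoking a standard Caccioppoli-type iteration argument, just as in the time-independent case (e.g.\ \cite[Chap.~4.1]{MalP20}), and then to treat the time derivative by differentiating the defining relations and reusing the decay already obtained for the $0$-th order term.

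\emph{Step 1: decay of the zeroth-order term \cref{eq:ProjfProjfk}.} I would fix $K\in\TriaH$ and compare $\ProjVhfKt\varphi$ with $\ProjVhfkKt\varphi$. Both are defined by the same right-hand side $\sprod{\Vhypt,K}{\varphi}{\cdot}$ tested against $\Vhypf$ resp.\ $\Vhypf(\patch{K})$. The difference $w := (\ProjVhfKt - \ProjVhfkKt)\varphi$ satisfies, for all $\psi \in \Vhypf(\patch{K})$, the Galerkin orthogonality $\sprod{\Vhypt}{w}{\psi} = 0$, since for such $\psi$ the right-hand sides agree. Using a cutoff function $\eta$ that is $0$ on $\patch[k-1]{K}$ and $1$ outside $\patch{K}$, together with the one-ring stability \cref{eq:InterpolhHLocalBounds} of $\InterpolH$ applied to $\eta\, (\ProjVhfKt\varphi)$, one derives the geometric decay
\[
  \norm{\ProjVhfKt\varphi}_{\Vhyp,\domain\setminus\patch{K}} \;\lesssim\; \mu^{\patchsize}\,\norm{\varphi}_{\Vhyp,K},
\]
with $\mu = \mu(\alc/\auc)\in(0,1)$, and consequently $\norm{(\ProjVhfKt - \ProjVhfkKt)\varphi}_{\Vhyp} \lesssim \mu^{\patchsize}\norm{\varphi}_{\Vhyp,K}$. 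Summing over $K$, using the finite overlap of the patches $\patch{K}$ (which costs the algebraic factor $\patchsize^{\spacedim/2}$ via Cauchy--Schwarz over the $O(\patchsize^{\spacedim})$ overlapping contributions) and $\ProjVhf = \sum_K\ProjVhfK$, $\ProjVhfk = \sum_K\ProjVhfkK$, yields \cref{eq:ProjfProjfk}.

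\emph{Step 2: decay of the time derivative \cref{eq:ptProjfProjfk}.} Here I would differentiate the defining identities \cref{eq:defProjVhfKt} and \cref{eq:defProjVhfkKt} in time, as was done in \cref{eq:ptProjVhft}: for $\psi$ in the respective test space,
\[
  \sprod{\Vhypt}{\pt\bigl(\ProjVhfKt\varphi\bigr)}{\psi}
  = \sprod{\Hhyp}{\pt\aeps(t)\,\grad(\varphi - \ProjVhfKt\varphi)}{\grad\psi}
    + \sprod{\Hhyp}{\aeps(t)\,\grad\pt\varphi}{\grad\psi},
\]
and similarly for the localized version on $\patch{K}$, where the source term is supported in $K$. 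Thus $\pt w = \pt(\ProjVhfKt - \ProjVhfkKt)\varphi$ again satisfies Galerkin orthogonality against $\Vhypf(\patch{K})$, but now with a source that is not supported in $K$: it is $\pt\aeps(t)\grad(\varphi - \ProjVhfKt\varphi)$, which is global. I would split $\pt w = \pt w_1 + \pt w_2$, where $\pt w_1$ is driven only by the part of the source inside $\patch{K}$ and decays geometrically exactly as in Step 1 (with $\norm{\pt\varphi}_{\Vhyp}$ and $\norm{\varphi}_{\Vhyp}$ on the right, the latter via \cref{eq:ProjfProjfk} controlling $\ProjVhfKt\varphi$), and $\pt w_2$ absorbs the tail of the source outside $\patch{K}$. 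The tail is controlled by $\norm{\pt\aeps}_{L^\infty}\norm{\ProjVhfKt\varphi}_{\Vhyp,\domain\setminus\patch{K}} \lesssim \mu^{\patchsize}\norm{\varphi}_{\Vhyp,K}$ using Step 1 once more, and an inverse estimate $H^{-\spacedim/2}$ appears when passing from the $K$-localized bound back to a global norm in the summation over $K$ (this is where the $H^{-\spacedim/2}$, rather than $\patchsize^{\spacedim/2}$, enters). Summing over $K$ as in Step 1 then gives \cref{eq:ptProjfProjfk}.

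\emph{Main obstacle.} The delicate point is Step 2: unlike the time-invariant case, $\pt w$ does \emph{not} solve a problem with an element-localized source, because differentiating brings down the globally supported term $\pt\aeps\,\grad(\ProjVhfKt\varphi)$. The whole argument hinges on the observation that although this term is global, it is itself exponentially small away from $K$ by the already-proven decay of $\ProjVhfKt\varphi$; feeding this back requires care to avoid a logarithmic or polynomial loss that would destroy the geometric rate, and it is the reason the estimate \cref{eq:ptProjfProjfk} carries the coarser prefactor $H^{-\spacedim/2}$ instead of $\patchsize^{\spacedim/2}$. Making the bootstrap precise — i.e.\ choosing the cutoff rings and splitting so that the tail contribution is genuinely of order $\mu^{\patchsize}$ and not merely $\mu^{\patchsize/2}$ — is the technical heart of the proof.
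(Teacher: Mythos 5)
Your proposal is correct and follows essentially the same route as the paper: the paper likewise cites the literature for \cref{eq:ProjfProjfk} and proves \cref{eq:ptProjfProjfk} by differentiating the corrector definitions, bootstrapping the resulting globally supported source term $\pt\aeps\grad\bigl(\ProjVhfK\varphi\bigr)$ against the already established exponential decay of $\ProjVhfK\varphi$, and finally summing over all elements. The only minor imprecision is your attribution of the factor $H^{-\spacedim/2}$ to an inverse estimate; in the paper it arises from the $\ell^1$--$\ell^2$ Cauchy--Schwarz bound over the $O(H^{-\spacedim})$ elements in the final triangle-inequality summation, precisely because the ring-wise localized decay that yields the sharper factor $\patchsize^{\spacedim/2}$ for the zeroth-order term is not established for the time derivative.
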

\begin{proof}
  The estimate \cref{eq:ProjfProjfk} was shown for instance in \cite[Lem.~3.6]{HenM14}. Thus, we focus on the proof of \cref{eq:ptProjfProjfk} in the following, which consists of three parts. In the first two parts, we show the local estimates
  \begin{align}
    \norm{\pt\bigl(\ProjVhfK \varphi\bigr)(t)}_{\Vhyp,\domain\setminus\patch{K}} &\lesssim \mu^k \bigl(\norm{\varphi(t)}_{\Vhyp,K} + \norm{\pt \varphi(t)}_{\Vhyp,K}\bigr), \label{eq:exponentialDecayProjVhfK} \\
    \norm{\pt\bigl((\ProjVhfK - \ProjVhfkK)\varphi\bigr)(t)}_{\Vhyp} &\lesssim \mu^k \bigl(\norm{\varphi(t)}_{\Vhyp,K} + \norm{\pt \varphi(t)}_{\Vhyp,K}\bigr). \label{eq:exponentialDecayProjVhfKProjVhfkK}
  \end{align}
  In the last part, we then conclude the global estimate \cref{eq:ptProjfProjfk}. Note that in the remainder of the proof we omit the time-dependency of functions for the sake of readability. Moreover, we refrain from specifying the element $K \in \TriaH$ for each patch $\patch{K}$ whenever this is clear from the context.
  
  \vspace{2ex}
  
  \textit{Step 1: Proof of \cref{eq:exponentialDecayProjVhfK}:} A corresponding estimate for constant-in-time coefficients is shown in \cite[Thm.~4.1]{MalP20}. In the following, we extend this result to time-dependent multiscale coefficients.
  
  For $k \geq 4$ and $K \in \TriaH$ fixed we introduce the cut-off function $\eta_1 \in \VhypH$ with
  \begin{align}
    \eta_1 &= 0 \quad\textnormal{in } \patch[k-3]{K}, & \eta_1 &= 1 \quad\textnormal{in } \domain\setminus\patch[k-2]{K}.
  \end{align}
  Thus, we obtain for $\widetilde{\varphi} = \ProjVhfK \varphi$ the estimate
  \begin{align}
    \norm{\pt \widetilde{\varphi}}_{\Vhypt,\domain\setminus\patch{}}^2 &\leq \sprod{\Vhypt}{\pt \widetilde{\varphi}}{(\Id - \InterpolH)(\eta_1 \pt \widetilde{\varphi})} + \sprod{\Vhypt}{\pt \widetilde{\varphi}}{\InterpolH(\eta_1 \pt \widetilde{\varphi})} \\
    &\hphantom{\leq{}}- \sprod{\Hhyp}{\aeps \grad \pt \widetilde{\varphi}}{(\nabla \eta_1) \pt \widetilde{\varphi}}.
  \end{align}
  For the first term, we derive from \cref{eq:defProjVhfKt} for $\psi \in \Vhypf$
  \begin{align} \label{eq:ptProjVhfK}
    \begin{aligned}
      \sprod{\Vhypt}{\pt\bigl(\ProjVhfK \varphi\bigr)}{\psi} &= \sprod{\Vhypt,K}{\pt \varphi}{\psi} + \sprod{\Hhyp,K}{\pt \aeps \grad \varphi}{\grad \psi} \\
      &\hphantom{={}}- \sprod{\Hhyp}{\pt \aeps \grad \bigl(\ProjVhfK \varphi\bigr)}{\grad \psi}.
    \end{aligned}
  \end{align}
  In particular, this yields for $\psi = (\Id - \InterpolH)(\eta_1 \pt \widetilde{\varphi}) \in \Vhypf\bigl(\domain\setminus\patch[k-4]{K}\bigr)$
  \begin{align}
    \sprod{\Vhypt}{\pt \widetilde{\varphi}}{(\Id - \InterpolH)(\eta_1 \pt \widetilde{\varphi})} &= -\sprod{\Hhyp}{\pt \aeps \grad \widetilde{\varphi}}{\grad \psi} \\
    &\lesssim \norm{\widetilde{\varphi}}_{\Vhyp,\domain\setminus\patch[k-4]{}} \norm{\pt \widetilde{\varphi}}_{\Vhyp,\domain\setminus\patch[k-4]{}},
  \end{align}
  where we used \cref{eq:InterpolhHBounds} in the last step. Since the other two terms can be bounded as in \cite[Thm.~4.1]{MalP20}, we obtain with constants $C_1, C_2 > 0$ 
  \begin{align}
    \norm{\pt \widetilde{\varphi}}_{\Vhypt,\domain\setminus\patch{}}^2 &\leq C_1 \norm{\pt \widetilde{\varphi}}_{\Vhypt,\patch[]{}\setminus\patch[k-4]{}}^2 + C_2 \norm{\widetilde{\varphi}}_{\Vhyp,\domain\setminus\patch[k-4]{}} \norm{\pt \widetilde{\varphi}}_{\Vhyp,\domain\setminus\patch[k-4]{}}.
    \end{align}
	The identity $\patch[]{}\setminus\patch[k-4]{}=(\domain\setminus\patch[k-4]{})\setminus (\domain\setminus \patch[]{})$ and Young's inequality for $\varepsilon > 0$ yield
	\begin{align}
	\norm{\pt \widetilde{\varphi}}_{\Vhypt,\domain\setminus\patch{}}^2
    &\leq \bigl(\tfrac{C_1}{1+C_1} + \tfrac{\varepsilon}{2}\bigr) \norm{\pt \widetilde{\varphi}}_{\Vhypt,\domain\setminus\patch[k-4]{}}^2 + \tfrac{1}{2 \varepsilon} \bigl(\tfrac{C_2}{1+C_1}\bigr)^2 \norm{\widetilde{\varphi}}_{\Vhyp,\domain\setminus\patch[k-4]{}}^2.
  \end{align}
  Due to $\tfrac{C_1}{1+C_1} < 1$ there is $\varepsilon$ sufficiently small such that using the previous argument repeatedly yields
  \begin{align}
    \norm{\pt \widetilde{\varphi}}_{\Vhypt,\domain\setminus\patch{}}^2 &\leq \mu^{\lfloor k/4 \rfloor} \norm{\pt \widetilde{\varphi}}_{\Vhypt,\domain}^2 + C_3 \sum_{i=1}^{\lfloor k/4 \rfloor} \mu^{i-1} \norm{\widetilde{\varphi}}_{\Vhyp,\domain\setminus\patch[k-4i]{}}^2,
  \end{align}
  for some $0 < \mu < 1$ and $C_3 > 0$. Thus, we finally obtain with
  \begin{align}
    \norm{\pt \widetilde{\varphi}}_{\Vhyp} \lesssim \norm{\varphi}_{\Vhyp, K} + \norm{\pt \varphi}_{\Vhyp, K}
  \end{align}
  and the exponential decay of $\widetilde{\varphi} = \ProjVhfK \varphi$ from \cite[Thm.~4.1]{MalP20} the estimate \cref{eq:exponentialDecayProjVhfK}, which remains valid for $k < 4$.
  
  \vspace{2ex}
  
  \textit{Step 2: Proof of \cref{eq:exponentialDecayProjVhfKProjVhfkK}:} For the second part, \cref{eq:defProjVhfkKt} implies for $\psi_k \in \Vhypf\bigl(\patch{K}\bigr)$
  \begin{align}
    \begin{aligned}
      \sprod{\Vhypt}{\pt\bigl(\ProjVhfkK \varphi\bigr)}{\psi_k} &= \sprod{\Vhypt,K}{\pt \varphi}{\psi_k} + \sprod{\Hhyp,K}{\pt \aeps \grad \varphi}{\grad \psi_k} \\
      &\hphantom{={}}- \sprod{\Hhyp}{\pt \aeps \grad \bigl(\ProjVhfkK \varphi\bigr)}{\grad \psi_k}.
    \end{aligned} \label{eq:ptProjVhfkK}
  \end{align}
  Together with \cref{eq:ptProjVhfK}, this yields for $w = (\ProjVhfK - \ProjVhfkK)\varphi$ and $\psi_k \in \Vhypf\bigl(\patch{K}\bigr)$ due to $\widetilde{\psi}_k = \pt\bigl(\ProjVhfkK \varphi\bigr) - \psi_k \in \Vhypf\bigl(\patch{K}\bigr)$ the estimate
  \begin{align}
    \norm{\pt w}_{\Vhypt}^2 &= \sprod{\Vhypt}{\pt w}{\pt w + \widetilde{\psi}_k} + \sprod{\Hhyp}{\pt \aeps \grad w}{\grad \widetilde{\psi}_k} \\
    &= \sprod{\Vhypt}{\pt w}{\pt \widetilde{\varphi} - \psi_k} + \sprod{\Hhyp}{\pt \aeps \grad w}{\grad \bigl(\pt \widetilde{\varphi} - \psi_k\bigr)} \\
    &\hphantom{={}}- \sprod{\Hhyp}{\pt \aeps \grad w}{\grad \bigl(\pt w\bigr)} \\
    &\lesssim \bigl(\norm{\pt w}_{\Vhyp} + \norm{w}_{\Vhyp}\bigr) \norm{\pt \widetilde{\varphi} - \psi_k}_{\Vhyp} + \norm{w}_{\Vhyp} \norm{\pt w}_{\Vhyp},
  \end{align}
  where we used the Cauchy--Schwarz inequality together with the norm equivalence \cref{eq:VVtnormequivalence} and the regularity \cref{eq:aepsBounds} of $\aeps$ in the last step. Since $\psi_k$ is arbitrary, we have shown
  \begin{align}
    \norm{\pt w}_{\Vhyp} &\lesssim \norm{w}_{\Vhyp} + \inf_{\psi_k \in \Vhypf(\patch{})} \norm{\pt \widetilde{\varphi} - \psi_k}_{\Vhyp}.
  \end{align}
  Using the estimate for $w = (\ProjVhfK - \ProjVhfkK)\varphi$ from \cite[Cor.~4.2]{MalP20}, the first term is bounded by the right-hand side of \cref{eq:exponentialDecayProjVhfKProjVhfkK}. For the second term, without loss of generality let $k \geq 3$. We choose $\psi_k = (\Id - \InterpolH)(\eta_2 \pt \widetilde{\varphi})$ with the cut-off function $\eta_2 \in \VhypH$ with
  \begin{align}
    \eta_2 &= 1 \quad\textnormal{in } \patch[k-2]{K}, & \eta_2 &= 1 \quad\textnormal{in } \domain\setminus\patch[k-1]{K}.
  \end{align}
  The estimate \cref{eq:exponentialDecayProjVhfKProjVhfkK} then follows with the same arguments as in the proof of \cite[Cor.~4.2]{MalP20}.
  
  \vspace{2ex}
  
  \textit{Step 3: Proof of \cref{eq:ptProjfProjfk}:} To obtain the global bound \cref{eq:ptProjfProjfk}, we use \cref{eq:exponentialDecayProjVhfKProjVhfkK} and the triangle inequality to get
  \begin{align}
    \norm{\pt \bigl((\ProjVhf - \ProjVhfk)\varphi\bigr)}_{\Vhyp} &\leq \sum_{K \in \TriaH} \norm{\pt \bigl((\ProjVhfK - \ProjVhfkK)\varphi\bigr)}_{\Vhyp} \\
    &\lesssim \mu^k \sum_{K \in \TriaH} \bigl(\norm{\varphi}_{\Vhyp,K} + \norm{\pt \varphi}_{\Vhyp,K}\bigr).
  \end{align}
  Due to the mesh regularity, this yields \cref{eq:exponentialDecayProjVhfKProjVhfkK}.
\end{proof}
\begin{remark}
  Compared to \cref{eq:ProjfProjfk} the estimate \cref{eq:ptProjfProjfk} for the time derivative seems to be  sub-optimal. However, we point out that due to the dominant exponential decay, this only mildly affects the patch size $\patchsize$; e.g., the choice $\patchsize \geq \bigl(1 + \frac{\spacedim}{2}\bigr)\abs{\log H}/\abs{\log \mu}$ in \Cref{thm:backwardEulerError} is sufficient to ensure the optimal convergence rates of the fully discrete scheme. Nevertheless, in future research it might be possible to improve the estimate \cref{eq:ptProjfProjfk}.
\end{remark}

Due to \cref{eq:IdProjVhmsktildetIdentity}, we directly conclude approximation properties of the localized projection $\ProjVhmsktilde$ based on the previous lemmas.
\begin{corollary} \label{lem:IfProjVhmsktilde}
  Let $t \in [0,T]$ and $\varphi \in C^1([0,T],\Vhyph)$. There is a constant $0 < \mu < 1$ depending on $\frac{\alc}{\auc}$ such that
  \begin{align}
    \norm{\bigl(\Id - \ProjVhmsktildet\bigr) \varphi(t)}_{\Vhyp} \lesssim \parbox{50ex}{$H \norm{\Ophypsech(t) \varphi}_{\Hhyp} + \patchsize^{\spacedim/2} \mu^\patchsize \norm{\varphi(t)}_{\Vhyp},$} \label{eq:IdProjmsktildeVh} \\
    \norm{\bigl(\Id - \ProjVhmsktildet\bigr) \varphi(t)}_{\Hhyp} \lesssim \parbox{50ex}{$\Bigl(H + \patchsize^{\spacedim/2} \mu^\patchsize\Bigr) \norm{\varphi(t)}_{\Vhyp},$} \label{eq:IdProjmsktildeHh} \\
    \begin{aligned}
      \norm{\pt \bigl((\Id - \ProjVhmsktilde) \varphi\bigr)(t)}_{\Vhyp} &\lesssim \parbox{50ex}{$H \Bigl(\norm{\Ophypsech(t) \varphi(t)}_{\Hhyp} + \norm{\pt \bigl( \Ophypsech(t) \varphi(t) \bigr)}_{\Hhyp}\Bigr)$} \\
      &\hphantom{\leq{}}\parbox{50ex}{$+ H^{-\spacedim/2} \mu^\patchsize \bigl(\norm{\varphi(t)}_{\Vhyp} + \norm{\pt \varphi(t)}_{\Vhyp}\bigr),$}
    \end{aligned} \label{eq:ptIdProjmsktildeVh} \\
    \norm{\pt \bigl((\Id - \ProjVhmsktilde) \varphi\bigr)(t)}_{\Hhyp} \lesssim \parbox{50ex}{$\Bigl( H + H^{-\spacedim/2} \mu^\patchsize\Bigr) \Bigl(\norm{\varphi(t)}_{\Vhyp} + \norm{\pt \varphi(t)}_{\Vhyp}\Bigr).$} \label{eq:ptIdProjmsktildeHh} \\[-4ex]
  \end{align}
  The hidden constant might depend on $\alc$, $\auc$, $\ptauc$, and the mesh regularity, but not on the variations of $\aeps$.
\end{corollary}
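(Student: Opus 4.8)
The plan is to deduce all four bounds of \Cref{lem:IfProjVhmsktilde} from the already-established estimates by exploiting the identity \cref{eq:IdProjVhmsktildetIdentity}, namely $\ProjVhmsktilde - \Id = (\ProjVhms - \Id) + (\ProjVhf - \ProjVhfk)\InterpolH$. The strategy for each of the four estimates is the same: apply this identity, use the triangle inequality to split into the ``ideal'' contribution $(\Id - \ProjVhms)\varphi$ and the ``localization'' contribution $(\ProjVhf - \ProjVhfk)\InterpolH\varphi$, then feed the first piece into \Cref{lem:IdProjmsVh} or \Cref{lem:IdProjmsHh} and the second piece into \Cref{lem:ProjfProjfk}, finally absorbing the interpolation operator via \cref{eq:InterpolhHBounds} and \cref{eq:InterpolhHStableHhyp}.

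Concretely, for \cref{eq:IdProjmsktildeVh} I would bound $\norm{(\Id-\ProjVhms)\varphi}_{\Vhyp} \lesssim H\norm{\Ophypsech(t)\varphi}_{\Hhyp}$ by \cref{eq:IdProjmsVh}, and $\norm{(\ProjVhf-\ProjVhfk)\InterpolH\varphi}_{\Vhyp} \lesssim k^{\spacedim/2}\mu^k\norm{\InterpolH\varphi}_{\Vhyp} \lesssim k^{\spacedim/2}\mu^k\norm{\varphi}_{\Vhyp}$ by \cref{eq:ProjfProjfk} together with $\Vhyp$-stability of $\InterpolH$ from \cref{eq:InterpolhHBounds}. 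For \cref{eq:IdProjmsktildeHh} the ideal part uses \cref{eq:IdProjmsHh}, giving $H\norm{\varphi}_{\Vhyp}$; the localization part again uses \cref{eq:ProjfProjfk}, but note the $\Hhyp$-norm of the localization term is controlled by its $\Vhyp$-norm only after using an inverse-type estimate or, more cleanly, by observing that $(\ProjVhf - \ProjVhfk)\InterpolH\varphi \in \Vhypf$ and invoking \cref{eq:InterpolhHBounds} applied to it (since $\InterpolH$ annihilates $\Vhypf$), so its $\Hhyp$-norm is $\lesssim H$ times its $\Vhyp$-norm, yielding $H\cdot k^{\spacedim/2}\mu^k\norm{\varphi}_{\Vhyp}$, which is dominated by $k^{\spacedim/2}\mu^k\norm{\varphi}_{\Vhyp}$. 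The time-derivative bounds \cref{eq:ptIdProjmsktildeVh} and \cref{eq:ptIdProjmsktildeHh} follow identically, now pairing \cref{eq:ptIdProjmsVh}/\cref{eq:ptIdProjmsHh} for the ideal part with \cref{eq:ptProjfProjfk} for the localization part, the latter contributing the $H^{-\spacedim/2}\mu^k(\norm{\varphi}_{\Vhyp} + \norm{\pt\varphi}_{\Vhyp})$ term after absorbing $\InterpolH$ (and its time-independence, so no product-rule term appears from $\pt\InterpolH$).

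The only mild subtlety — and the closest thing to an obstacle — is the treatment of the $\Hhyp$-norm of the localization term $(\ProjVhf-\ProjVhfk)\InterpolH\varphi$: one must avoid a spurious inverse-estimate factor $h^{-1}$. The clean way is to note this function lies in $\Vhypf = \ker\InterpolH$, so \cref{eq:InterpolhHBounds} (first estimate, applied with $\InterpolH$ of it being zero) gives $\norm{\cdot}_{\Hhyp} \lesssim H\norm{\cdot}_{\Vhyp}$, turning the $H^{-\spacedim/2}\mu^k$ localization factor into $H^{1-\spacedim/2}\mu^k$, which is indeed dominated by $H + H^{-\spacedim/2}\mu^k$ as claimed in \cref{eq:ptIdProjmsktildeHh}. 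Everything else is a routine combination of triangle inequalities, the norm equivalence \cref{eq:VVtnormequivalence}, and the stability of $\InterpolH$; since $\InterpolH$ carries no time dependence, the time-derivative estimates require no new ideas beyond what is collected in \Cref{lem:IdProjmsVh}, \Cref{lem:IdProjmsHh}, and \Cref{lem:ProjfProjfk}.
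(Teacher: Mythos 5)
Your proposal is correct and follows exactly the route the paper intends: the paper gives no separate proof for this corollary, stating only that it follows ``directly'' from the identity \cref{eq:IdProjVhmsktildetIdentity} combined with \Cref{lem:IdProjmsVh}, \Cref{lem:IdProjmsHh}, and \Cref{lem:ProjfProjfk}, which is precisely the triangle-inequality splitting you carry out. Your handling of the $\Hhyp$-norm of the localization term via $(\ProjVhf-\ProjVhfk)\InterpolH\varphi\in\ker\InterpolH$ and \cref{eq:InterpolhHBounds} correctly fills in the one detail the paper leaves implicit.
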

We emphasize that \cref{eq:IdProjmsktildeVh} and \cref{eq:IdProjmsktildeHh} are also valid for the standard localized multiscale projection $\ProjVhmsk$, cf. \cite[Lem.~3.5]{MalP18}. However, up to our knowledge it is unclear whether this also extends to the estimates \cref{eq:ptIdProjmsktildeVh} and \cref{eq:ptIdProjmsktildeHh} for the time derivatives, even with \Cref{lem:ProjfProjfk} at hand. To circumvent this, in our analysis we rely on the non-orthogonal localized multiscale projection $\ProjVhmsktilde$ instead.

\subsection{Analysis of fully discrete localized orthogonal decomposition}
Finally, we present the proof of our main result.
\begin{proof}[Proof of \Cref{thm:backwardEulerError}]
  Using a similar notation as in \cref{eq:notationDiscreteOps}, i.e.,
  \begin{align}
    \solhyphd[m] &= \solhyph(\timed{m}), & \Ophyphd[m] &= \Ophyph(\timed{m}), & \rhshyphd[m] &= \rhshyph(\timed{m}), & m &= 0,\dots,N,
  \end{align}
  the solution $\solhyph = (\solhypsech, \pt \solhypsech)$ of \cref{eq:Cauchyhypdisc} satisfies the perturbed scheme
  \begin{align}
    \solhyphd[n+1] &= \solhyphd[n] + \tau \Ophyphd[n+1] \solhyphd[n+1] + \tau \rhshyphd[n+1] + \tau \defectEuler[n+1], & n &=0,\dots,N-1,
  \end{align}
  with a defect given by
  \begin{align}
    \defectEuler[n+1] &= \tfrac{1}{\tau} (\solhyphd[n+1] - \solhyphd[n]) + \pt \solhyph(\timed{n+1}).
  \end{align}
  Thus, the multiscale error $\errmskd[n] = \solhypmskd[n] - \RefXhmsktilded[n] \solhyphd[n]$ satisfies the error recursion
  \begin{align}
    \errmskd[n+1] - \ProjXhmskd[n+1] \errmskd[n] &= (\ProjXhmskd[n+1] - \Id) \RefXhmsktilded[n] \solhyphd[n] + \tau \Ophypmskd[n+1] \errmskd[n+1] + \tau \errmskrhsd[n+1],
  \end{align}
  with the right-hand side
  \begin{align} \label{eq:rhsErrorFull}
    \errmskrhsd[n+1] &= \bigl( \Ophypmskd[n+1] \RefXhmsktilded[n+1] - \Ophyphd[n+1] \bigr) \solhyphd[n+1] + \rhshypmskd[n+1] - \rhshyphd[n+1] - \defectEuler[n+1] \\
    &\hphantom{={}}+ \tfrac{1}{\tau} \int_{\timed{n}}^{\timed{n+1}} \pt \Bigl( (\Id - \RefXhmsktildet[t]) \solhyph(t) \Bigr) \dt.
  \end{align}
  Taking the inner product in $\Xhypmskd[n+1]$ with $\errmskd[n+1]$, we obtain with \cref{eq:OphyphSkew}, \cref{eq:notationDiscreteOps}, and Young's inequality
  \begin{align} \label{eq:errorDifferenceEuler}
    \norm{\errmskd[n+1]}_{\Xhypd[n+1]}^2 - \norm{\errmskd[n]}_{\Xhypd[n+1]}^2 &\leq 2 \tau \sprod{\Xhypd[n+1]}{\errmskrhsd[n+1]}{\errmskd[n+1]}.
  \end{align}

  To further bound the right-hand side, we first obtain due to the stability of $\ProjVhmsktilde$, \cref{eq:ProjfProjfk}, and \cref{eq:IdProjmsktildeVh} for $\varphi_h \in \Vhyph$ and $\psi_H \in \VhypH$ the estimate
  \begin{align} \label{eq:EstimatePhihPsiH}
    \begin{aligned}
      \MoveEqLeft \sprod{\Vhypd[n+1]}{\bigl( \ProjVhmsktilded[n+1] - \Id \bigr) \varphi_h}{(\ProjVhfd[n+1] - \ProjVhfkd[n+1]) \psi_H} \\
      &\lesssim \patchsize^{\spacedim/2} \mu^\patchsize \min\left\{\norm{\varphi_h}_{\Vhyp}, \Bigl( H \norm{\Ophypsech(\timed{n+1}) \varphi_h}_{\Hhyp} + \patchsize^{\spacedim/2} \mu^\patchsize \norm{\varphi_h}_{\Vhyp} \Bigr) \right\} \norm{\psi_H}_{\Vhyp}.
    \end{aligned}
  \end{align}
  We denote the finite element parts of the multiscale error $\errmskd[n+1] = (\errmsseckd[n+1], \pterrmsseckd[n+1])$ by $e_{H,\solhypsec}^{n+1}, e_{H,\ptsolhypsec}^{n+1} \in \VhypH$, i.e., we have
  \begin{align}
    \errmsseckd[n+1] &= (1 - \ProjVhfkd[n+1])e_{H,\solhypsec}^{n+1}, & \pterrmsseckd[n+1] &= (1 - \ProjVhfkd[n+1])e_{H,\ptsolhypsec}^{n+1}.
  \end{align}
  Based on an inverse estimate, cf. \cite[Thm.~4.5.11]{BreS08}, and the stability \cref{eq:InterpolhHBounds} and \cref{eq:InterpolhHStableHhyp} of $\InterpolH$ in $\Vhyp$ and $\Hhyp$, respectively, we further have
  \begin{align} \label{eq:boundsCoarseErrorParts}
    \norm{e_{H,\solhypsec}^{n+1}}_{\Vhyp} &\lesssim \norm{\errmsseckd[n+1]}_{\Vhyp}, & \norm{e_{H,\ptsolhypsec}^{n+1}}_{\Vhyp} &\lesssim H^{-1} \norm{\pterrmsseckd[n+1]}_{\Hhyp}.
  \end{align}
  Thus, for the choice $\patchsize \geq \bigl(1 + \frac{\spacedim}{2}\bigr)\abs{\log H}/\abs{\log \mu}$ we conclude with the Cauchy--Schwarz inequality, \cref{eq:IdProjVhmsktildetIdentity}, and \cref{eq:EstimatePhihPsiH} for the first term of \cref{eq:rhsErrorFull} 
  \begin{align}
    \MoveEqLeft \sprod{\Xhypd[n+1]}{\bigl( \Ophypmskd[n+1] \RefXhmsktilded[n+1] - \Ophyphd[n+1] \bigr) \solhyphd[n+1]}{\errmskd[n+1]} \\
    &= \sprod{\Vhypd[n+1]}{\bigl( \ProjVhmsktilded[n+1] - \Id\bigr) \ptsolhypsechd[n+1]}{(\ProjVhfd[n+1] - \ProjVhfkd[n+1])e_{H,\solhypsec}^{n+1}} \\
    &\hphantom{={}}- \sprod{\Vhypd[n+1]}{\bigl( \ProjVhmsktilded[n+1] - \Id\bigr) \solhypsechd[n+1]}{(\ProjVhfd[n+1] - \ProjVhfkd[n+1])e_{H,\ptsolhypsec}^{n+1}} \\
    &\lesssim H \Bigl( \norm{\solhypsechd[n+1]}_{\Vhyp} + \norm{\pt \solhypsechd[n+1]}_{\Vhyp} + \norm{\Ophypsech(\timed{n+1}) \solhypsechd[n+1]}_{\Hhyp} \Bigr) \norm{\errmskd[n+1]}_{\Xhypd[n+1]}.
  \end{align}
  Moreover, \cref{eq:notationDiscreteOps} implies
  \begin{align} \label{eq:rhshypmshvanishes}
    \sprod{\Xhypd[n+1]}{\rhshypmskd[n+1] - \rhshyphd[n+1]}{\errmskd[n+1]} &= 0.
  \end{align}
  For the defect, we deduce with Taylor's theorem
  \begin{align}
    \norm{\defectEuler[n+1]}_{\Xhypd[n+1]} &\lesssim \tau \bigl(\norm{\ptt \solhypsech}_{L^\infty([0,T],\Vhyp)} + \norm{\pttt \solhypsech}_{L^\infty([0,T],\Hhyp)}\bigr).
  \end{align}
  Finally, we apply \cref{eq:ptIdProjmsktildeVh} and \cref{eq:ptIdProjmsktildeHh} together with the norm equivalence \cref{eq:VVtnormequivalence} to show
  \begin{align}
    \norm{\pt \Bigl( \bigl( \Id - \RefXhmsktildet \bigr) \solhyph(t) \Bigr)}_{\Xhypt} &\lesssim H \Bigl(\norm{\Ophypsech(t) \solhypsech(t)}_{\Hhyp} + \norm{\pt \bigl( \Ophypsech(t) \solhypsech(t) \bigr)}_{\Hhyp} \\
    &\hphantom{\leq C H \Bigl(}+ \norm{\solhypsech(t)}_{\Vhyp} + \norm{\pt \solhypsech(t)}_{\Vhyp} + \norm{\ptt \solhypsech(t)}_{\Vhyp}\Bigr).
  \end{align}
  Collecting all results in \cref{eq:rhsErrorFull}, we obtain with \cref{eq:boundOphypsechsolhypsech}, \cref{eq:boundptOphypsechsolhypsech}, and Young's inequality
  \begin{align}
    \norm{\errmskd[n+1]}_{\Xhypd[n+1]}^2 - \norm{\errmskd[n]}_{\Xhypd[n+1]}^2 &\leq \tau \norm{\errmskd[n+1]}_{\Xhypd[n+1]}^2 + \tau \widehat{\mathcal{C}}_{\solhypsec,\rhshypsec}^{\,2} \bigl( \tau +  H \bigr)^2,
  \end{align}
  for some $\eps$-independent constant $\widehat{\mathcal{C}}_{\solhypsec,\rhshypsec} > 0$ depending on the regularity assumptions \cref{eq:regularityforEulererrornonautonomouswave}. Further, since \cref{eq:aepsBounds} implies
  \begin{align} \label{eq:normexponential}
    \begin{aligned}
      \norm{\errmskd[n]}_{\Xhypd[n+1]}^2 &= \norm{\errmskd[n]}_{\Xhypd[n]}^2 + \int_{\timed{n}}^{\timed{n+1}} \sprod{\Hhyp}{\pt \aeps(t) \grad \errmsseckd[n]}{\grad \errmsseckd[n]} \dt \leq \expp{\ptauc \tau} \norm{\errmskd[n]}_{\Xhypd[n]}^2,
    \end{aligned}
  \end{align}
  we have
  \begin{align}
    \norm{\errmskd[n+1]}_{\Xhypmskd[n+1]}^2 - \expp{\ptauc \tau} \norm{\errmskd[n]}_{\Xhypmskd[n]}^2 &\leq \tau \norm{\errmskd[n+1]}_{\Xhypmskd[n+1]}^2 + \tau \widehat{\mathcal{C}}_{\solhypsec,\rhshypsec}^{\,2} \bigl( \tau +  H \bigr)^2.
  \end{align}
  Multiplying with $\expp{-\ptauc \timed{n+1}}$ and using this identity recursively for $n,\dots,0$, we get
  \begin{align}
    \expp{-\ptauc \timed{n+1}} \norm{\errmskd[n+1]}_{\Xhypd[n+1]}^2 &\leq \norm{\errmskd[0]}_{\Xhypd[0]}^2 + \tau \sum_{r=0}^n \expp{-\ptauc \timed{r+1}} \norm{\errmskd[r+1]}_{\Xhypd[r+1]}^2 + \timed{n+1} \widehat{\mathcal{C}}_{\solhypsec,\rhshypsec}^{\,2} \bigl( \tau +  H \bigr)^2.
  \end{align}
  Finally, a discrete version of Gronwall's inequality shows
  \begin{align}
    \norm{\errmskd[n+1]}_{\Xhypd[n+1]}^2 &\leq \expp{\mathcal{C}_\acoeff \timed{n+1}} \Bigl( \norm{\errmskd[0]}_{\Xhypd[0]}^2 + \timed{n+1} \widehat{\mathcal{C}}_{\solhypsec,\rhshypsec}^{\,2} \bigl( \tau +  H \bigr)^2 \Bigr).
  \end{align}
  This concludes the proof, since with the notation $\solhyphinit = (\solhypsechinit, \ptsolhypsechinit)$ the initial error satisfies due to \cref{eq:notationDiscreteOps} and \cref{eq:IdProjmsktildeHh} the estimate
  \begin{align}
    \norm{\errmskd[0]}_{\Xhypd[0]}^2 &= \norm{\bigl(\ProjHhmskt[0] - \ProjVhmsktildet[0]\bigr) \ptsolhypsechinit}_{\Hhyp}^2 \\
    &\leq \sprod{\Hhyp}{\bigl(\Id - \ProjVhmsktildet[0]\bigr) \ptsolhypsechinit}{\bigl(\ProjHhmskt[0] - \ProjVhmsktildet[0]\bigr) \ptsolhypsechinit} \\
    &\lesssim H \norm{\ptsolhypsechinit}_{\Vhyp} \norm{\errmskd[0]}_{\Xhypd[0]}.\qedhere
  \end{align}
\end{proof}

\section{Practical aspects} \label{sec:practicalAspects}
In the previous sections, we proved that our fully discrete scheme \cref{eq:backwardEulerLOD} for the wave equation with time-dependent multiscale coefficients is wellposed and first-order convergent. However, since the multiscale coefficient $\aeps$ is time-dependent, we have to recompute all correctors \cref{eq:defProjVhfKt} in each time step, which might become rather time-consuming in practice. To circumvent this, we modify \cref{eq:backwardEulerLOD} in the following two subsections. First, we propose a Petrov--Galerkin variant in the spirit of \cite{ElfGH15} to avoid products of multiscale functions thereby reducing the communication in the assembly of the mass and stiffness matrices of the discrete system. Furthermore, this allows for an adaptive update strategy, which significantly improves the computational efficiency, cf.~\cite{HelM19,HelKM20}.
We work with the backward Euler time stepping scheme as before, but emphasize that the following derivations can be easily transferred to the implicit midpoint rule mentioned in~\Cref{rem:implicitMidpoint}.

\subsection{Petrov--Galerkin formulation}
We introduce the Petrov--Galerkin formulation in the matrix-vector-notation. To do so, we denote the nodal basis of the coarse finite element space $\FESpaceH$ by $\{\lambda_j\}_{j=1}^J$, i.e., there are grid points $\{x_j\}_{j=1}^J \subset \domain$ such that $\lambda_i(x_j) = \delta_{ij}$, $i, j=1,\dots,J$, where $\delta_{ij}$ denotes the Kronecker delta. Due to the structure of the localized multiscale space $\Vhypmskt$ \cref{eq:defFESpacemskt}, we identify any time-dependent function $\zeta \colon [0,T] \to \Vhypmskt$ with its time-dependent vector-valued representation $\underline{\zeta} = (\underline{\zeta}_1, \dots, \underline{\zeta}_J) \colon [0,T] \to \R^J$, which is uniquely characterized by
\begin{align}
  \zeta(t) &= \sum_{j=1}^J \bigl(\Id-\ProjVhfkt\bigr) \lambda_j \underline{\zeta}_j(t), & t &\in [0,T].
\end{align}

For the Petrov--Galerkin variant of \cref{eq:backwardEulerLOD} we directly use the coarse finite element space $\VhypH$ instead of the time-dependent localized multiscale space $\Vhypmskt$ as the test space. To this end, we introduce for $m=1,\dots,N$ the mass and stiffness matrices
\begin{align}
  \begin{aligned}
    \underline{M}_{ij}^m &= \sum_{K \in \TriaH} (\underline{M}_K^m)_{ij} = \sum_{K \in \TriaH} \sprod{L^2(\patch{K})}{\lambda_i}{(\Id-\ProjVhfkKd[m])\lambda_j}, \\
    \underline{A}_{ij}^m &= \sum_{K \in \TriaH} (\underline{A}_K^m)_{ij} = \sum_{K \in \TriaH} \sprod{L^2(\patch{K})}{\aepsd{m}\nabla\lambda_i}{\nabla \bigl((\Id-\ProjVhfkKd[m])\lambda_j\bigr)\bigr)},
  \end{aligned}
  && i,j &=1,\dots,J,
\end{align}
based on the short notation $\aepsd{m} = \aeps(\timed{m},\cdot)$ and $\ProjVhfkKd[m] = \ProjVhfkKt[\timed{m}]$ for the multiscale coefficient and the localized correctors \cref{eq:defProjVhfkKt}, respectively. For the first-order system, we further define
\begin{align} 
 \underline{\mathcal{A}}^m &= \begin{pmatrix} 0 & \underline{M}^m \\ -\underline{A}^m & 0 \end{pmatrix}, &
 \underline{\mathcal{M}}^m &= \begin{pmatrix} \underline{M}^m & 0 \\ 0 & \underline{M}^m \end{pmatrix}.
\end{align}

Then, the solution of the Petrov--Galerkin variant of \cref{eq:backwardEulerLOD} is given by
\begin{align}
  z^m &= \sum_{j=1}^J \bigl(\Id-\ProjVhfkd[m]\bigr) \lambda_j \underline{z}_j^m, & m&=0,\dots,N,
\end{align}
where the vector-valued representations satisfy the recursion
\begin{align} \label{eq:backwardEulerPGLOD}
  \underline{\mathcal{M}}^{n+1}\underline{z}_i^{n+1} &= \underline{\mathcal{M}}^{n+1}\underline{z}_i^n + \tau \underline{\mathcal{A}}_{ij}^{n+1} \underline{z}_j^{n+1} + \tau \underline{\mathcal{F}}_{i}^{n+1}, & n &=0,\dots,N-1.
\end{align}
For the initial value, we set $\underline{z}^0 = \InterpolH \solhyphinit$. Further, we set $\underline{\mathcal{F}}_{i}^{n+1}=\sprod{L^2(\domain)}{f_h(\timed{n+1}, \cdot)}{\lambda_i}$. 

\begin{remark}
	Note that in~\cref{eq:backwardEulerPGLOD}, we slightly modified the projection of $\underline{z}_i^n$ in comparison to~\cref{eq:backwardEulerLOD}.
	Namely, we use the finescale projection $\ProjVhfkKd[n+1]$ instead of projecting $(\Id-\ProjVhfkKd[n])\lambda_i\underline{z}_i^n$ onto $\Vhypmskd[n+1]$.
	This, however, is justified by the error estimates obtained through~\cref{eq:ProjfProjfk}.
	In fact, one can even use the standard finite element mass matrix on the coarse mesh, see~\cite{MaiP19}, and mass lumping is possible as well, see~\cite{GeeM21*}. 
\end{remark}

\subsection{Adaptive update strategy}\label{subsec:adapupdate}
As a remedy for the recomputation of all correctors in each time step, we propose an adaptive update strategy for the correctors and LOD matrices that will be investigated in detail in the numerical experiments.
The strategy is based on the following local error indicators. Let $\aepsd{i}=\aeps(\timed{i}, \cdot)$ and $\aepsd{n}=\aeps(\timed{n}, \cdot)$ be the coefficients at two different time instances with associated correctors $\ProjVhfkd[i]$ and $\ProjVhfkd[n]$. 
For any element $K$, we denote by
\[\apatch[k]{K}(t)=\abs{\patch[k]{K}}^{-1}\int_{\patch[k]{K}} \aeps(t, x) \dx\]
the local average of $\aeps$ and by 
\[\hat{\hat{a}}_{\eps,\patch[k]{K}}(t, x)=\aeps(t,x)|_{\patch[k]{K}}/\apatch[k]{K}(t)\]
the locally scaled version of $\aeps$.
Similar to \cite{HelM19,HelKM20}, we define for each element $K \in \TriaH$ the local error indicator
\begin{equation}
	\begin{aligned}
	E_K(\aepsd{i}, \aepsd{n})^2=\norm{\hat{\hat{a}}_\eps^i (\hat{\hat{a}}_\eps^n)^{-1}}_{L^\infty(K)}\sum_{K^\prime\in \patch[k]{K}}&\norm{(\hat{\hat{a}}_\eps^i)^{-1/2}(\hat{\hat{a}}_\eps^i-\hat{\hat{a}}_\eps^n)(\hat{\hat{a}}_\eps^n)^{-1/2}}^2_{L^\infty(K^\prime)}\\
	&\cdot\max_{v|_K, v\in V_H}\frac{\norm{(\hat{\hat{a}}_\eps^i)^{1/2}(\chi_K\nabla v-\nabla \ProjVhfkK^i v)}^2_{L^2(K^\prime)}}{\norm{(\hat{\hat{a}}_\eps^i)^{1/2}\nabla v}^2_{L^2(K)}},
	\end{aligned}
\end{equation}
where $\chi_K$ denotes the characteristic function of $K$.
We emphasize that we consider locally scaled versions of the coefficients in contrast to the original indicators \cite{HelM19}.
This scaling is important in the present setting of time-dependent coefficients: As discussed in \Cref{eq:timeDependentMultiscaleSpaces}, the correctors and multiscale spaces do not need to change over time in the case of coefficients with ``tensor-product structure'', i.e., $\aeps(t,x)=a_{1,\eps}(x)a_2(t)$.
The local error indicator defined above reflects this fact in the sense that $E_K(\aepsd{i}, \aepsd{n})=0$ for all $K$, $i$ and $n$ for such tensor-product coefficients $\aeps$.

\begin{remark}
	We choose to scale the coefficient by dividing with its local average. Other scalings, such as the local minimum or maximum, are equally possible.
	To extend the procedure to matrix-valued coefficients, one can, for instance, scale with the local average of the trace of $\aeps$.
\end{remark}

We now explain the adaptive algorithm which we illustrate for the LOD stiffness matrix $\underline{A}^{n}$ in the $n$th time step.
For the first time step, all correctors $\widehat{R}_{\textup{f},k}= \ProjVhfkd[1]$ are computed based on $\widehat{a}^{1}=\aeps(\timed{1}, \cdot)$, as described in the previous section.
In the subsequent time steps, we evaluate $E_K(\widehat{a}^{n-1}, \aepsd{n})$ for all elements $K$.
Given a prescribed tolerance \texttt{tol}, we mark all elements with $E_K(\widehat{a}^{n-1}, \aepsd{n})\geq \texttt{tol}$ and compute a new corrector $\ProjVhfkKd[n]$ for these elements based upon $\aepsd{n}$.
Otherwise, the available corrector from the previous time step(s) is used.
We then set
\begin{equation}
\begin{aligned}
	\widehat{R}_{\textup{f}, K, k}^n=\begin{cases}
		\ProjVhfkKd[n], \qquad K\text{ marked},\\
		\widehat{R}_{\f,k,K}^{n-1},\qquad \text{else},
	\end{cases}
\qquad \text{and}\qquad
\widehat{a}_{K}^n=\begin{cases}
	\aepsd{n}|_{\patch[k]{K}}, \hphantom{\widehat{a}_{K}^{n-1}} K\text{ marked},\\
	\widehat{a}_{K}^{n-1}, \hphantom{\aepsd{n}|_{\patch[k]{K}}} \text{else},
\end{cases}
\end{aligned}
\end{equation}
and assemble the stiffness matrix contributions as
\[(\underline{\widehat{A}}_K)_{ij}^n=\sprod{\patch[k]{K}}{\widehat{a}^n_K\nabla \lambda_i}{ \nabla(\Id-\widehat{R}_{\f,k,K}^n)\lambda_j}.\]
This simply means that the LOD stiffness matrix is defined as a mixture of newly computed correctors as well as local coefficients of the current time step and reused correctors and local coefficients of previous time step(s).
A similar procedure can be applied to calculate the other LOD matrices if necessary.
Finally, we note that in the extreme cases $\texttt{tol}=0$ or $\texttt{tol}=\infty$, we obtain the Petrov--Galerkin variant of the method from \Cref{subsec:fullyDiscreteScheme} or a time-stepping with fixed multiscale space based upon $a(\timed{1}, \cdot)$, respectively.

\begin{remark}[Practical choice of \texttt{tol}]\label{rem:tolerance}
	Since the absolute value of the error indicator is hard to predict in practice, we suggest the following choice of the tolerance that is also used in our numerical experiments: Fix a tolerance factor $\zeta_{\mathrm{tol}}\in [0,1]$ and set the tolerance  in the $n$th time step to \[\mathtt{tol}= \bigl(\min_{K\in \mathcal T_H} E_K^n\bigr)+\zeta_{\mathrm{tol}}\, \bigl(\max_{K\in \mathcal T_H} E_K^n-\min_{K\in \mathcal T_H}E_K^n\bigr),\]
	where $E_K^n= E_K(\widehat{a}^{n-1}, \aepsd{n})$ denotes the error indicator in the $n$th step.
\end{remark}

It was shown in \cite{HelM19} that $\ProjVhfkK^n-\widehat{R}_{\f,k,K}^n$ can be bounded by $E_K(\widehat{a}^n, \aepsd{n})$ and thereby by \texttt{tol}.
Similar to the estimate for the consistency error in \cite[Thm.~4.1]{HelKM20}, one can show that
\[\norm{\underline{A}^n-\underline{\widehat{A}}^n}\lesssim k^{d/2}\,  \mathtt{tol}\]
holds for all $n$ in a suitable matrix norm.

Let $\widehat{y}_{k,\mathrm{ms}}^n\in (1-\widehat{R}_{\f,k}^n)\XhypH$ be the LOD solution computed with the adaptive update strategy.
From \Cref{thm:backwardEulerError} and a perturbation argument, we expect the following error estimate
\[\norm{\widehat{y}_{k,\mathrm{ms}}^n-y_h^n}_{\mathcal X}\leq \mathcal{C}_{\solhypsec,\rhshypsec}\, \expp{\mathcal{C}_\acoeff \timed{n}}\, (\tau + H + \mathtt{tol}).\]
In fact, our numerical experiments show that the LOD with adaptive update strategy still converges with (spatial) rate $H$ if $\mathtt{tol}$ is chosen small enough and consequently the consistency error is sufficiently small in comparison to the discretization error.

\section{Numerical examples} \label{sec:NumExp}
In this section, we illustrate the theoretical error estimate and the adaptive update strategy by numerical examples.
We implemented the Petrov--Galerkin method with the implicit midpoint rule as the time integration scheme using the python module \texttt{gridlod} \cite{HelK19}. 
The code to reproduce the examples below is publicly available at \url{https://github.com/BarbaraV/gridlod-timedependent}.

Throughout the experiments, we consider the wave equation with time-dependent multiscale coefficients on the spatial domain $\domain=[0,1]^2$ and for the time interval $[0,T]$ with final time $T=1$.
We always use homogeneous initial conditions, i.e., $\solhypsecinit=0$ and $\ptsolhypsecinit=0$.
The numerical experiments study the relative error between a finescale finite element (reference) solution and the LOD solution measured in the energy norm of $\Xhyp$ at final time $T=1$.

\subsection{Spatial and temporal convergence}\label{subsec:exp1}
In our first numerical experiment, we illustrate spatial and temporal convergence rates of the LOD where all correctors are updated in every time step. 
We choose a periodic time-dependent multiscale coefficient as
\[\aeps(t,x)=(3+\sin\bigl(2\pi\frac{x_1}{\varepsilon}\bigr)+\sin(2\pi t))(3+\sin\bigl(2\pi\frac{x_2}{\varepsilon}\bigr)+\sin(2\pi t))\]
with $\varepsilon=2^{-7}$
and compare two right-hand sides with different spatial regularity, namely
\[f_1(t,x)=\begin{cases}
	20t+230t^2,\qquad\;\;\;\, x_1>0.4,\\
	100t+2300t^2,\qquad x_1<0.4,
\end{cases}\]
and \[f_2(t,x)=20t(x_1-x_1^2)(x_2-x_2^2)+230t^2(x_1-x_1^2+x_2-x_2^2).\]
Note that $f_1\in C^\infty(0,T; L^2(\domain))$ and $f_2\in C^\infty(0,T; H^1(\domain))$.
The reference solution is computed using the finite element method on a fine mesh with mesh width $h=2^{-9}$ and the implicit midpoint rule with step size $\tau =2^{-7}$.

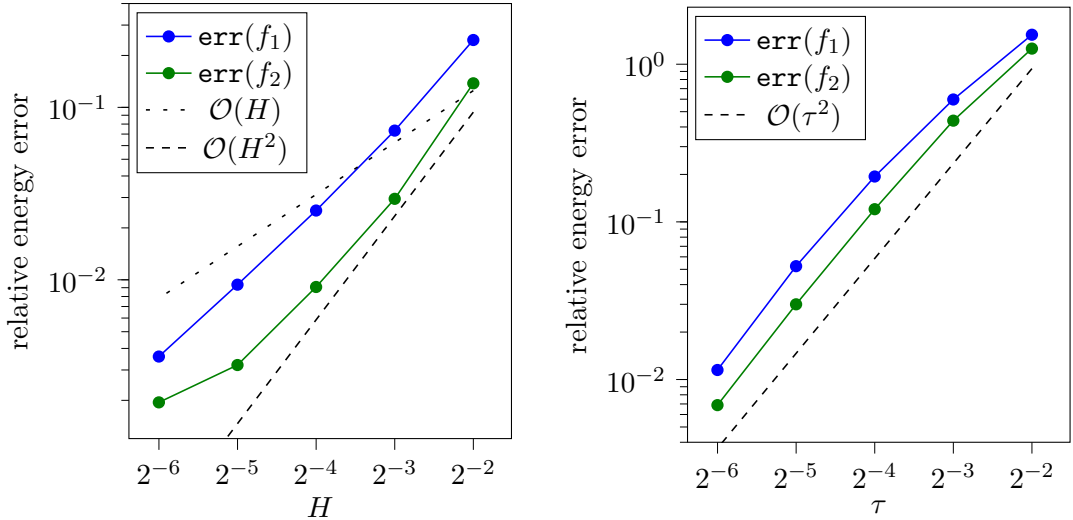
\begin{figure}
	\centering
	\begin{minipage}{0.5\textwidth}
		\begin{tikzpicture}
			
			\begin{axis}[
				height=1.0\textwidth,
				width=0.9\textwidth,
				log basis x={2},
				log basis y={10},
				tick align=outside,
				tick pos=left,
				x grid style={white!69.0196078431373!black},
				xlabel={$H$},
				xlabel style={at={(0.5,-0.02)}},
				xmin=0.012, xmax=0.35,
				xmode=log,
				xtick style={color=black},
				y grid style={white!69.0196078431373!black},
				ylabel={relative energy error},
				ylabel style={at={(-0.04,0.5)}},
				ymin=0.0012, ymax=0.4,
				ymode=log,
				ytick style={color=black},
				legend style={at={(0.02,0.98)},anchor=north west}
				]
				\addplot [semithick, blue, mark=*, mark size=2, mark options={solid}]
				table {%
					0.25 0.245773933509689
					0.125 0.0733268175824731
					0.0625 0.0252186456156111
					0.03125 0.00936959605581148
					0.015625 0.00358906800823274
				};
				\addlegendentry{$\mathtt{err} (f_1)$}
				\addplot [semithick, green!50.1960784313725!black, mark=*, mark size=2, mark options={solid}]
				table {%
					0.25 0.137869540378424
					0.125 0.0294840150376002
					0.0625 0.00908510421570741
					0.03125 0.00320452604705976
					0.015625 0.00194646973993396
				};
				\addlegendentry{$\mathtt{err} (f_2)$}
				\addplot [semithick, black, dash pattern=on 1.5pt off 5pt]
				table {%
					0.25 0.125
					0.125 0.0625
					0.0625 0.03125
					0.03125 0.015625
					0.015625 0.0078125
				};
				\addlegendentry{$\mathcal{O}(H)$}
				\addplot [semithick, black, dashed]
				table {%
					0.25 0.09375
					0.125 0.0234375
					0.0625 0.005859375
					0.03125 0.00146484375
					0.015625 0.0003662109375
				};
				\addlegendentry{$\mathcal{O}(H^2)$}
			\end{axis}
			
		\end{tikzpicture}
	\end{minipage}%
	\begin{minipage}{0.5\textwidth}
	  \begin{tikzpicture}
	  	
	  	\begin{axis}[
	  		height=1.0\textwidth,
	  		width=0.9\textwidth,
	  		log basis x={2},
	  		log basis y={10},
	  		tick align=outside,
	  		tick pos=left,
	  		x grid style={white!69.0196078431373!black},
	  		xlabel={$\tau$},
	  		xlabel style={at={(0.5,-0.02)}},
	  		xmin=0.012, xmax=0.35,
	  		xmode=log,
	  		xtick style={color=black},
	  		y grid style={white!69.0196078431373!black},
	  		ylabel={relative energy error},
	  		ylabel style={at={(-0.04,0.5)}},
	  		ymin=0.004, ymax=2.3,
	  		ymode=log,
	  		ytick style={color=black},
	  		legend style={at={(0.02,0.98)},anchor=north west}
	  		]
	  		\addplot [semithick, blue, mark=*, mark size=2, mark options={solid}]
	  		table {%
	  			0.25 1.5382556846222
	  			0.125 0.597484258178004
	  			0.0625 0.193991308839704
	  			0.03125 0.0523529458680587
	  			0.015625 0.0114938436587302
	  		};
	  		\addlegendentry{$\mathtt{err} (f_1)$}
	  		\addplot [semithick, green!50.1960784313725!black, mark=*, mark size=2, mark options={solid}]
	  		table {%
	  			0.25 1.25620624152684
	  			0.125 0.438527390878514
	  			0.0625 0.12034311100664
	  			0.03125 0.0299959456843265
	  			0.015625 0.00688015517075078
	  		};
	  		\addlegendentry{$\mathtt{err} (f_2)$}
	  		\addplot [semithick, black, dashed]
	  		table {%
	  			0.25 0.9375
	  			0.125 0.234375
	  			0.0625 0.05859375
	  			0.03125 0.0146484375
	  			0.015625 0.003662109375
	  		};
	  		\addlegendentry{$\mathcal{O}(\tau^2)$}
	  	\end{axis}
	  	
	  \end{tikzpicture}
	\end{minipage}%
	\caption{Convergence of the relative energy error for the experiments of \Cref{subsec:exp1}, left: spatial convergence, right: temporal convergence.}
	\label{fig:exp1}
\end{figure}

We first fix $\tau=2^{-7}$ for the LOD and study the spatial convergence on meshes with $H=2^{-2}, 2^{-3}, \ldots, 2^{-6}$ and $k=1,2,2,3,3$.
\Cref{fig:exp1} (left) shows that for $f_1$, we obtain a bit more than linear convergence (rate of about $H^{3/2}$) and for $f_2$, the convergence is even of quadratic order.
Note that $f_1$ fulfills the regularity and compatibility requirements for our theoretical error estimates and the results underline the spatial convergence rates predicted.
Slightly more than linear convergence was also observed in numerical experiments for the autonomous wave equation, cf.~\cite{MaiP19}.
The better rate for $f_2$ is explained by its higher spatial regularity $H^1(\domain)$ as also shown theoretically in the time-invariant case \cite{AbdH17}.
These refined error estimates may carry over to the present non-autonomous case.
The slower convergence for the smallest $H$ for $f_2$ can probably be cured by a larger choice of $k$.

To study the temporal convergence, we fix $H=2^{-6}$ and $\patchsize=3$ and vary $\tau = 2^{-2}, 2^{-3}, \ldots, 2^{-6}$.
\Cref{fig:exp1} (right) shows that we obtain a quadratic rate in $\tau$ for both right-hand sides. 
This is the expected result for the implicit midpoint rule, cf.~\Cref{rem:implicitMidpoint}.
Altogether, this experiment clearly underlines the theoretically expected convergence rates for the LOD energy error when all correctors are updated in every step.

\subsection{Adaptive update strategy}\label{subsec:exp2}
We now study the adaptive update strategy presented in \Cref{subsec:adapupdate} and its influence on the energy error.
In the following we focus on the spatial convergence for fixed tolerance as well as the dependence of the error on the chosen tolerance.
We compare different multiscale coefficients, which are discontinuous in space.
We choose as right-hand side
\[f(t,x)=\sin(\pi x_1)\sin(\pi x_2)(5t+50t^2),\]
which has the same regularity properties as $f_2$ in the first example.
The reference finite element solution is again computed with $h=2^{-9}$ and $\tau = 2^{-7}$.

The first coefficient is
\[a_1(t,x)=(1+0.5\cos(9t))a_{\mathrm{disc}}(x),\quad \text{with}\quad a_{\mathrm{disc}}(x)=\begin{cases}
	10, \quad \frac{x}{\varepsilon}\in [0.25, 0.75]^2,\\
	1, \quad \quad \text{else},
\end{cases} \]
with $\varepsilon=2^{-7}$. The coefficient $a_\mathrm{disc}$ is visualized in \Cref{fig:exp2a} (left), where we chose $\varepsilon=2^{-5}$ for better visibility.
The structure of $a_1$ allows us to keep the correctors constant in time, cf.~\Cref{eq:timeDependentMultiscaleSpaces}, and only multiply the stiffness matrix with the global value $(1+0.5\cos(9\timed{n+1/2}))$ in the $n$th time step.
Hence, in this case the method is as efficient as for time-independent coefficients: We can pre-compute the multiscale basis (or the LOD stiffness matrix) and only have to solve a small linear system in each time step.
\Cref{fig:exp2a} (right) shows the spatial convergence for $H=2^{-2}, 2^{-3}, \ldots, 2^{-6}$, $k=1,2,2,3,3$, and $\tau =2^{-7}$, where we observe a quadratic rate as expected due to the spatial regularity of the right-hand side, as discussed in the previous section.
We emphasize that it is crucial to use the error indicator with scaled coefficients in this example since otherwise unnecessary updates of the correctors would be triggered.

\begin{figure}
	\centering
	\includegraphics[width=0.42\textwidth, trim=25mm 0mm 28mm 10mm, clip=true]{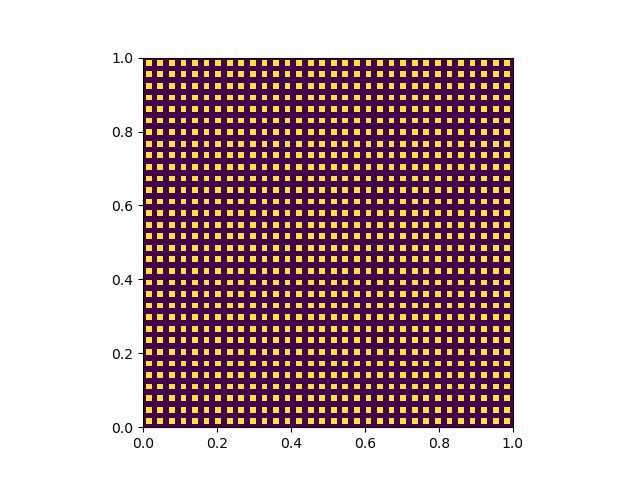}%
	\hspace*{0.02\textwidth}%
	\begin{tikzpicture}
		
		\begin{axis}[
			height=0.43\textwidth,
			width=0.53\textwidth,
			log basis x={2},
			log basis y={10},
			tick align=outside,
			tick pos=left,
			x grid style={white!69.0196078431373!black},
			xlabel={$H$},
			xlabel style={at={(0.5,-0.02)}},
			xmin=0.012, xmax=0.35,
			xmode=log,
			xtick style={color=black},
			y grid style={white!69.0196078431373!black},
			ylabel={relative energy error},
			ylabel style={at={(-0.04,0.5)}},
			ymin=0.00015, ymax=0.3,
			ymode=log,
			ytick style={color=black},
			legend style={at={(0.02,0.98)},anchor=north west}
			]
			\addplot [semithick, black, mark=*, mark size=2, mark options={solid}]
			table {%
				0.25 0.12919456976444
				0.125 0.0223567772679948
				0.0625 0.00584986822765361
				0.03125 0.00126165150192473
				0.015625 0.000286924794058873
			};
			\addlegendentry{$\mathtt{err} (a_1)$}
			\addplot [semithick, black, dashed]
			table {%
				0.25 0.05
				0.125 0.0125
				0.0625 0.003125
				0.03125 0.00078125
				0.015625 0.000195313
			};
			\addlegendentry{$\mathcal{O}(H^2)$}
		\end{axis}
		
	\end{tikzpicture}
	\caption{First experiment in \Cref{subsec:exp2}: coefficient $a_{\mathrm{disc}}$ for $\varepsilon=2^{-5}$ (left -- blue is $1$ and yellow is $10$) and spatial convergence of the LOD method for $a_1$ (right).}
	\label{fig:exp2a}
\end{figure}

Now we consider two coefficients without this ``tensor-product'' structure in space and time to study the influence of the update strategy.
We consider
\[a_2(t,x)=\begin{cases}
	(1+0.5\cos(9t)) a_{\mathrm{disc}}(x), \qquad x\in [0.25, 0.75]^2,\\a_{\mathrm{disc}}(x), \qquad\qquad\qquad\qquad\quad \text{else},
\end{cases}\]
and
\[a_3(t,x)= a_{\mathrm{disc}}(x)+1+0.5\cos(9t)\]
with the same $a_{\mathrm{disc}}$ as for $a_1$.
We first update all correctors where the indicator has a value larger than the mean value of all error indicators, i.e., $\zeta_{\mathrm{tol}}=0.5$ as explained in \Cref{rem:tolerance}.
The spatial convergences for $H=2^{-2}, 2^{-3}, 2^{-4}, 2^{-5}$, $k=1,2,2,3$, and $\tau=2^{-6}$ are shown in \Cref{fig:exp2b} (left).
Except for the last mesh size, we still obtain at least linear convergence in $H$, while especially for $a_2$ we even have quadratic convergence.
The theoretically expected order is $\mathcal{O}(H^2)$ if we update all correctors in every time step and slower linear convergence as well as the stagnation for the last mesh size can be explained by the dominance of the updating error.
Still, the relative energy error is only a few percent for mesh sizes of $H=2^{-4}$ or $H=2^{-5}$, which is satisfactory in many applications. 
Moreover, we emphasize that by the adaptive strategy we only need to update $14.1\%$ or $50.6\%$ of the correctors on average in every step for $a_2$ or $a_3$, respectively.
As the main computation time is spent on the assembly of the LOD stiffness matrices, such an adaptive update strategy reduces the computational complexity considerably in comparison to the ``perfect'' case where correctors are computed in every time step. 
The smaller update percentage for $a_2$ is caused by the time modulation acting only in some part of the domain.
\Cref{fig:exp2b} (right) shows how the maximal tolerance (over all time steps) as well as the energy error evolve for different choices of the tolerance factor for fixed $H=2^{-5}$ and $k=3$.
We see that tolerance factors around $0.5$ seem to provide a good compromise between computational efficiency and accuracy.

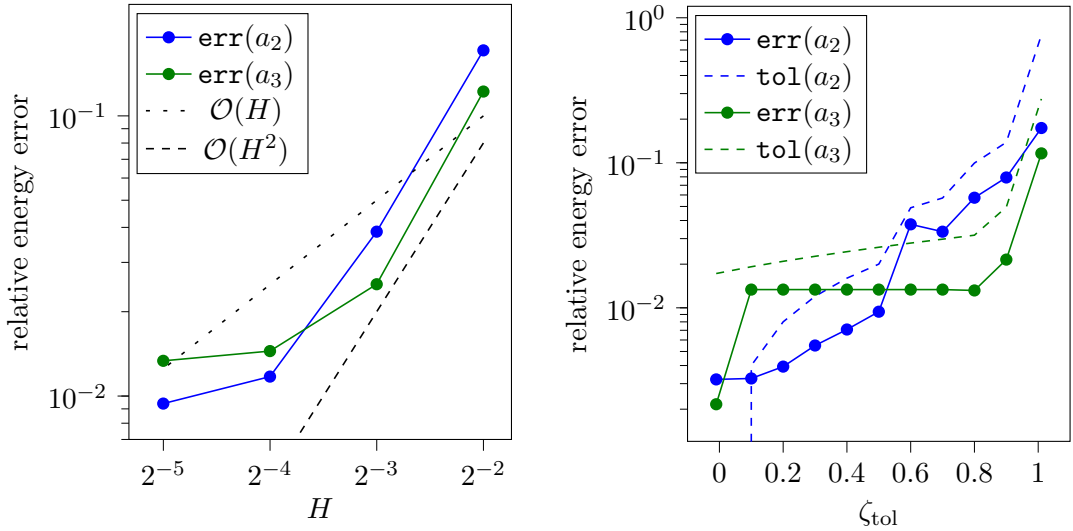
\begin{figure}
	\centering
	\begin{minipage}{0.5\textwidth}
	  \begin{tikzpicture}
	  	
	  	\begin{axis}[
	  		height=1.0\textwidth,
	  		width=0.9\textwidth,
	  		log basis x={2},
	  		log basis y={10},
	  		tick align=outside,
	  		tick pos=left,
	  		x grid style={white!69.0196078431373!black},
	  		xlabel={$H$},
	  		xlabel style={at={(0.5,-0.02)}},
	  		xmin=0.025, xmax=0.3,
	  		xmode=log,
	  		xtick style={color=black},
	  		y grid style={white!69.0196078431373!black},
	  		ylabel={relative energy error},
	  		ylabel style={at={(-0.04,0.5)}},
	  		ymin=0.007, ymax=0.25,
	  		ymode=log,
	  		ytick style={color=black},
	  		legend style={at={(0.02,0.98)},anchor=north west}
	  		]
	  		\addplot [semithick, blue, mark=*, mark size=2, mark options={solid}]
	  		table {%
	  			0.25 0.171353877509619
	  			0.125 0.038617799415264
	  			0.0625 0.0117342014940261
	  			0.03125 0.00940022493639302
	  		};
	  		\addlegendentry{$\mathtt{err} (a_2)$}
	  		\addplot [semithick, green!50.1960784313725!black, mark=*, mark size=2, mark options={solid}]
	  		table {%
	  			0.25 0.122146268890487
	  			0.125 0.0250680831959435
	  			0.0625 0.0144713959450606
	  			0.03125 0.0133576912757673
	  		};
	  		\addlegendentry{$\mathtt{err} (a_3)$}
	  		\addplot [semithick, black, dash pattern=on 1.5pt off 5pt]
	  		table {%
	  			0.25 0.1
	  			0.125 0.05
	  			0.0625 0.025
	  			0.03125 0.0125
	  		};
	  		\addlegendentry{$\mathcal{O}(H)$}
	  		\addplot [semithick, black, dashed]
	  		table {%
	  			0.25 0.08
	  			0.125 0.02
	  			0.0625 0.005
	  			0.03125 0.00125
	  		};
	  		\addlegendentry{$\mathcal{O}(H^2)$}
	  	\end{axis}
	  	
	  \end{tikzpicture}
	\end{minipage}%
	\begin{minipage}{0.5\textwidth}
	  \begin{tikzpicture}
	  	
	  	\begin{axis}[
	  		height=1.0\textwidth,
	  		width=0.9\textwidth,
	  		log basis y={10},
	  		tick align=outside,
	  		tick pos=left,
	  		x grid style={white!69.0196078431373!black},
	  		xlabel={$\zeta_{\mathrm{tol}}$},
	  		xlabel style={at={(0.5,-0.02)}},
	  		xmin=-0.1, xmax=1.1,
	  		xtick style={color=black},
	  		y grid style={white!69.0196078431373!black},
	  		ylabel={relative energy error},
	  		ylabel style={at={(-0.04,0.5)}},
	  		ymin=0.0012, ymax=1.2,
	  		ymode=log,
	  		ytick style={color=black},
	  		legend style={at={(0.02,0.98)},anchor=north west}
	  		]
	  		\addplot [semithick, blue, mark=*, mark size=2, mark options={solid}]
	  		table {%
	  			-0.01 0.00320887384970397
	  			0.1 0.00325828898539855
	  			0.2 0.00393212586938201
	  			0.3 0.00549705409482705
	  			0.4 0.00708712537162354
	  			0.5 0.00940022493639302
	  			0.6 0.0376367520078116
	  			0.7 0.0334954024471669
	  			0.8 0.0574396171892143
	  			0.9 0.0792190930534284
	  			1.01 0.173719719134373
	  		};
	  		\addlegendentry{$\mathtt{err} (a_2)$}
	  		\addplot [semithick, blue, dashed]
	  		table {%
	  			0.1 0.00000000000001
	  			0.1 0.00400277695467322
	  			0.2 0.00800555390934644
	  			0.3 0.0120083308640197
	  			0.4 0.0160111078186929
	  			0.5 0.0200138847733661
	  			0.6 0.0489551953331495
	  			0.7 0.0571813207673312
	  			0.8 0.0996039477008501
	  			0.9 0.138418387694386
	  			1.01 0.760957676241099
	  		};
	  		\addlegendentry{$\mathtt{tol} (a_2)$}
	  		\addplot [semithick, green!50.1960784313725!black, mark=*, mark size=2, mark options={solid}]
	  		table {%
	  			-0.01 0.00216319975988598
	  			0.1 0.0133576912757673
	  			0.2 0.0133576912757673
	  			0.3 0.0133576912757673
	  			0.4 0.0133576912757673
	  			0.5 0.0133576912757673
	  			0.6 0.0133576912757673
	  			0.7 0.0133576912757673
	  			0.8 0.0131753087501526
	  			0.9 0.0215040678106359
	  			1.01 0.116110582362147
	  		};
	  		\addlegendentry{$\mathtt{err} (a_3)$}
	  		\addplot [semithick, green!50.1960784313725!black, dashed]
	  		table {%
	  			-0.01 0.0172376828230601
	  			0.1 0.0191989292210158
	  			0.2 0.0209276418403233
	  			0.3 0.0226563544596309
	  			0.4 0.0243850670789385
	  			0.5 0.0261653891136359
	  			0.6 0.0279520801616294
	  			0.7 0.0297387712096229
	  			0.8 0.0316811031373093
	  			0.9 0.0494783326147054
	  			1.01 0.275756139706156
	  		};
	  		\addlegendentry{$\mathtt{tol} (a_3)$}
	  	\end{axis}
	  	
	  \end{tikzpicture}
	\end{minipage}%
	\caption{Second experiment in \Cref{subsec:exp2}: spatial convergence for fixed tolerance factor $0.5$ (left) and error as well as maximal tolerance in dependence on tolerance factor (right).}
	\label{fig:exp2b}
\end{figure}

On the whole, we conclude that in all cases considered, we could achieve relative energy errors of only a few percent already using moderate mesh widths and time step sizes and updating only about half of the correctors on average in every time step.

\section*{Conclusion}
In this work, we analyzed a multiscale method in the spirit of the localized orthogonal decomposition for wave equations with time-dependent multiscale coefficients.
The method constructs (coarse) multiscale spaces in each time step.
We rigorously proved convergence rates in the mesh width and the time step size for spatially rough coefficients.
For this, we showed the exponential decay of the time derivative of the multiscale basis functions.
To obtain a computationally efficient method, we proposed an adaptive update strategy for the multiscale basis based upon an appropriate error indicator.
The presented numerical examples have underlined the theoretical findings and in particular show that small updates in every time step and moderate choices of the oversampling parameter are already sufficient to obtain reasonable approximations.

We expect the methodology and the techniques of error analysis to carry over to other problem classes as well, for instance parabolic problems with space- and time-dependent coefficients.
Further, by considering several previous time steps in the adaptive update strategy, even more coefficient classes such as (almost) time-periodic coefficients may become treatable in the future.


\end{document}